\def\bad{\spaceskip=0.33emplus0.6emminus0.15em\immediate\write5{\string\bad}}
\theoremstyle{plain}
\newtheorem{theorem}{Theorem}
\newtheorem{lemma}{Lemma}
\theoremstyle{definition}
\newtheorem{proof}{Proof}
\newtheorem{remark}{Remark}
\begin{document}

\hyphenation{англ назв МИАН Докл МЦНМО Фазис МАИК}

\title{On the Karatsuba divisor problem}

\doi{10.4213/im9270}

\author{V.~V.~Iudelevich}[Vitalii~V.~Iudelevich]%
\address{Lomonosov Moscow State University}

\email{vitaliiyudelevich@mail.ru}

\date{02.10.2021}
\udk{511.337}

\maketitle

\markright{On the Karatsuba divisor problem}

\begin{fulltext}

\footnotetext[0]{The paper was published in the journal Izv. RAN (2022), \textbf{86}(5), pp. 169-196.}

\begin{abstract}
We obtain an upper bound for the sum $\Phi_a(x) = \sum_{p\le x}1/(\tau(p+a))$, where $\tau(n)$ is the divisor function, $a\ge 1$ is a fixed integer, and $p$ run through primes up to $x$.

Bibliography: 14 titles.
\end{abstract}


\textbf{Key words and phrases:} divisor function, shifted primes.

\section*{Introduction}
In 2004, A.~A. Karatsuba in his seminar ``Analytic number theory and applications'' suggested the following problem: find the asymptotic formula for the sum
$$
\Phi_a(x) = \sum_{p \le x}\frac{1}{\tau(p+a)},\qquad x\to +\infty,
$$
where $\tau(n)$ denotes divisor function, $a$ is a fixed integer, and the summation is taken over primes not exceeding $x$.

This problem is a result of discussions between A.\,A.~Karatsuba and V.\,I.~Arnold. Note that it  contains features of the following two problems from analytic number theory. The first one is Titchmarsh divisor problem. It is asking about the asymptotic behavior of the sum
$$
F_a(x) = \sum_{p\le x}\tau(p+a).
$$
In 1930, Titchmarsh \cite{1} proved the estimate
$$
\sum_{p\le x}\tau(p-1) = O(x).
$$
He also proved under Generalized Riemann Hypothesis that
$$
\sum_{p\le x}\tau(p-1)\sim cx,\qquad c = \frac{\zeta(2)\zeta(3)}{\zeta(6)}.
$$
The first unconditional result was obtained by Yu.~V.~Linnik \cite{2}. He proved with the dispersion method that
\begin{gather*}
\sum_{p\le x}\tau(p-1) = cx +R(x),\qquad c=\frac{\zeta(2)\zeta(3)}{\zeta(6)},
\\
R(x) \ll \frac{x}{(\ln x)^{\alpha}},\qquad 0 < \alpha < 1.
\end{gather*}
Subsequently, this result was refined by many authors (see\ \cite{3}--\cite{7}).
The second problem is to find the asymptotic formula for the sum 
$$
T(x) = \sum_{n\le x}\frac{1}{\tau(n)}.
$$
In 1916, S. Ramanujan \cite{8} proved that
$$
T(x) = \frac{x}{\sqrt{\ln x}}\biggl(a_0 +\frac{a_1}{\ln x}+\dots+\frac{a_n}{(\ln x)^n}+O_n\biggl(\frac{1}{(\ln x)^{n+1}}\biggr)\biggr),
$$
where $a_n$ are some constants; in particular,
$$
a_0 = \frac{1}{\sqrt{\pi}}\prod_{p}\sqrt{p^2-p}\,\ln{\frac{p}{p-1}}.
$$
The aim of this paper is to find an upper bound for the sum $\Phi_a(x)$.

Since
$$
\frac{1}{x}\sum_{p\le x}1\asymp \frac{1}{\ln x},
$$
and
$$
\frac{1}{x}\sum_{n\le x}\frac{1}{\tau(n+a)}\asymp\frac{1}{\sqrt{\ln x}},
$$
then it is natural to expect that
$$
\frac{1}{x}\Phi_a(x)\asymp\frac{1}{(\ln x)\sqrt{\ln x}} = \frac{1}{(\ln x)^{3/2}}.
$$
In this paper, we obtain the upper bound
$$
\Phi_a(x)\ll_a\frac{x}{(\ln x)^{3/2}}.
$$
More precisely, we prove the following theorem.

\begin{theorem}
\label{t1}
Let $a\ge 1$ be a fixed integer. Then we have
$$
\sum_{p\le x}\frac{1}{\tau(p+a)}\le{4 K(a)}\frac{x}{(\ln x)^{3/2}}+O\biggl(\frac{x \ln\ln x}{(\ln x)^{5/2}}\biggr),
$$
where
\begin{gather*}
K(a) = K\beta(a),\qquad K= \frac{1}{\sqrt{\pi}}\prod_{p}\sqrt{\frac{p}{p-1}}\, \biggl(p\ln\frac{p}{p-1} -\frac{1}{p-1}\biggr),
\\
\beta(a) = \prod_{p\,|\,a}\biggl(1+\frac{1}{p(p-1)\ln(p/(p-1))-1}\biggr).
\end{gather*}
\end{theorem}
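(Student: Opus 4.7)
The plan is to expand $1/\tau(n)=\sum_{d\mid n}f(d)$, where $f=\mu\ast(1/\tau)$ is multiplicative with $f(1)=1$ and $f(p^{k})=-1/(k(k+1))$ for $k\ge 1$; in particular $|f(p)|=1/2$, so $f$ enjoys a Mobius-like cancellation of weight $1/2$. Interchanging summation gives
$$
\Phi_a(x)=\sum_{(d,a)=1}f(d)\,\pi(x;d,-a)+O(\tau(a)),
$$
since classes $-a\pmod{d}$ with $(d,a)>1$ can only catch primes dividing $a$.

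Next, I would split at a cutoff $D=\sqrt{x}/(\log x)^{A}$ with $A$ large and apply the Bombieri--Vinogradov theorem in the range $d\le D$, reducing it to
$$
\pi(x)\sum_{d\le D,\,(d,a)=1}\frac{f(d)}{\varphi(d)}+O\bigl(x/(\log x)^{B}\bigr).
$$
The arithmetic sum is handled by the Selberg--Delange method: the Dirichlet series $\sum f(d)/(\varphi(d)d^{s})$ factors as $\zeta(s+1)^{-1/2}G_{a}(s)$ with $G_{a}$ holomorphic near $s=0$, and a Hankel contour around the branch point produces
$$
\sum_{d\le D,\,(d,a)=1}\frac{f(d)}{\varphi(d)}=\frac{G_{a}(0)}{\sqrt{\pi\log D}}+O\bigl((\log D)^{-3/2}\bigr).
$$
Since $\log D=(\log x)/2+O(\log\log x)$, together with an explicit computation of the Euler product for $G_a(0)$, the main term should assemble into $4K(a)\cdot x/(\log x)^{3/2}$, with the factor $\beta(a)$ emerging from the modified Euler factors at primes $p\mid a$.

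For the tail $d>D$ I would apply Brun--Titchmarsh, $\pi(x;d,-a)\le 2x/(\varphi(d)\log(x/d))$, together with the Selberg--Delange estimate $\sum_{d\le Y}|f(d)|/\varphi(d)\ll\sqrt{\log Y}$ and partial summation, bounding this contribution by $x\log\log x/(\log x)^{5/2}$.

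The main difficulty will be matching the explicit constant $4K(a)$. The factor $4$ exceeds what the bare Selberg--Delange asymptotic predicts, and presumably reflects wastage---for instance the factor $2$ in Brun--Titchmarsh combined with the $\sqrt{2}$ coming from $\log D=(\log x)/2$, or the loss incurred when passing from $f$ to $|f|$ in the tail. Tracking all these constants through the contour argument, and reconciling the local factors at $p\mid a$ with $\beta(a)$, is the most delicate step.
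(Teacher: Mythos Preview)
Your tail estimate is where the argument breaks. You claim that $\sum_{d>D}|f(d)|\,\pi(x;d,-a)\ll x\log\log x/(\log x)^{5/2}$ via Brun--Titchmarsh and the bound $\sum_{d\le Y}|f(d)|/\varphi(d)\ll\sqrt{\log Y}$. But precisely because $|f(p)|=1/2$, the partial sums $\sum_{d\le Y}|f(d)|/\varphi(d)$ \emph{grow} like $\sqrt{\log Y}$ rather than converge, so the tail from $D\approx\sqrt{x}$ to $x$ is still of size $\asymp\sqrt{\log x}$. Feeding this into Brun--Titchmarsh, a dyadic computation gives
\[
\sum_{D<d\le x/2}\frac{|f(d)|}{\varphi(d)\log(x/d)}\;\asymp\;\frac{\log\log x}{\sqrt{\log x}},
\]
so the tail contributes $\gg x\log\log x/\sqrt{\log x}$, which swamps the main term $x/(\log x)^{3/2}$. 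There is no symmetry trick (like the hyperbola method for $\tau$) that lets you truncate the convolution $1/\tau=1*f$ to small divisors, and the signs of $f$ cannot be exploited here since you have no independent control on $\pi(x;d,-a)$ for large $d$.

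This obstruction is exactly why the paper does \emph{not} open $1/\tau$ as a convolution. Instead it applies a Selberg upper-bound sieve to the condition ``$n$ is prime'': one bounds $\Phi_a(x)\le\sum_{n\le x}\tau(n+a)^{-1}\bigl(\sum_{d\mid(n,P_a(z))}\rho_d\bigr)^2$, so the moduli $[d_1,d_2]\le z^2$ are small by construction and there is no tail. The resulting quadratic forms $\sum\rho_{d_1}\rho_{d_2}g_k([d_1,d_2])$ (with $g_0(d)=J_d(1)/\varphi(d)$ coming from the Ramanujan-type expansion of $\sum_{(k,d)=1}1/\tau(k)$) are handled by the optimal Selberg weights; the main technical work (Lemma~10) shows these same weights also control the higher-order forms $g_k$, $k\ge1$. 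The non-principal characters are dispatched by Korolev's Bombieri--Vinogradov analogue for $\sum\chi(n)/\tau(n)$. Finally, the constant $4$ is not Brun--Titchmarsh wastage: it comes from $\ln z\sim\tfrac14\ln x$, forced by the requirement $z^2\le\sqrt{x}(\log x)^{-A}$ so that the sieve moduli stay in the admissible range.
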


Thus, the upper bound coincides with the conjectural order of magnitude of $\Phi_a(x)$.

In this paper we rely on Selberg's sieve method. The essence of this method is the following. We need to estimate the sum
$$
S(\mathcal{A},z) = \sum_{(n,P(z)) = 1}a_n,
$$
where $\mathcal{A} = (a_n)$ is a given sequence of non-negative real numbers and $P(z) = \prod_{p\le z}p$ is the product of all primes up to $z$. 
Since
$$
\sum_{d\,|\,n}\mu(d) = \begin{cases}
1, &\text{if }n = 1,
\\
0, &\text{otherwise},
\end{cases}
$$
it follows that
$$
S(\mathcal{A},z) = \sum_{n}a_n\sum_{d\,|\,(n,P(z))}\mu(d).
$$
Let us introduce arbitrary numbers $\rho_d$ (here $d \le z$, $d\,|\,P(z)$), for which $\rho_1 = 1$. Then $\sum_{d\,|\,n}\mu(d)\,{\le} \bigl( \sum_{d\,|\,n}\rho_d\bigr)^2$ 
for all $n\ge 1$. Thus, we obtain
$$
S(\mathcal{A},z)\le \sum_{n}a_n\biggl(\sum_{d\,|\,(n,P(z))} \rho_d\biggr)^2 = \sum_{d_1,d_2\,|\,P(z)}\rho_{d_1}\rho_{d_2}A_{[d_1,d_2]},
$$
where
$$
A_d = \sum_{n \equiv 0\,(\operatorname{mod} d)}a_n.
$$
Further, assume that for considered $d$ we have $A_d = Xg(d)+r_d$, where $g(d)$ is multiplicative function, $X$ is independent of $d$, and $r_d$ is small in average. 
The coefficients $\rho_d$ are chosen to minimize the quadratic form.
\begin{equation}
\label{eq1}
B = \sum_{d_1, d_2\,|\,P_a(z)}\rho_{d_1}\rho_{d_2}g([d_1,d_2]).
\end{equation}
In our case, $A_d$ has a more complicated form
$$
A_d = X_0 g_0(d)+ X_1 g_1(d)+\dots+X_{m} g_m(d),
$$
where $g_k(d)$ are some functions (generally speaking, $g_k(d)$ are not multiplicative for $k\geqslant 1$), and the values $X_k$ are independent of $d$. The coefficients $\rho_d$ are constructed only from the first quadratic form \eqref{eq1} corresponding to the function $g\,{=}\,g_0$. 
In this case we observe the following effect. These coefficients simultaneously minimize the remaining quadratic forms corresponding to the functions $g_k$, at least in terms of the order of magnitude.

Note that the solution of Titchmarsh divisor problem is based on Bombieri-Vinogradov theorem. 
In our paper we essentially use the analogue of Bombieri-Vinogradov theorem, 
obtained by M.~A.~Korolev (see \ \cite[Lemma~13]{9}).
We observe that the methods of this paper can also be applied for other sums. For instance, one can show that if
$$
T_a(x) = \sum_{\substack{p \le x \\ p,\,p+2\text{ primes}}}\frac{1}{\tau{(p+a)}},
$$
where the summation is over twin primes, then
$$
T_a(x)\le \frac{c(a) x}{(\ln x)^{5/2}}(1+o(1)).
$$
\textbf{Acknowledgements.} The work was supported by the Theoretical Physics and Mathematics
Advancement Foundation ``BASIS''.

\section{Auxiliary results}
\label{s1}

\begin{lemma}
\label{l1}
Determine the coefficients $c_k$ from the expansion
\begin{equation}
\label{eq2}
\frac{x}{-(\ln(1-x))} = \sum_{k = 0}^{+\infty}c_kx^k,\qquad |x|<1.
\end{equation}
Then $c_0 = 1$, and $|c_k|\le 1$ for $k\ge 1$.
\end{lemma}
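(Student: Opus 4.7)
The plan is to derive an explicit integral formula for the coefficients $c_k$ and then estimate it directly; the desired bound $|c_k|\le 1$ will actually fall out with room to spare.

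First, I would substitute $u=-x$ in \eqref{eq2} to pass from $x/(-\ln(1-x))$ to the cleaner generating function $u/\ln(1+u)$: writing $u/\ln(1+u)=\sum_{k\ge 0}b_k u^k$, one has $c_k=(-1)^k b_k$. The key identity is
$$
\frac{u}{\ln(1+u)}=\int_0^1(1+u)^s\,ds,\qquad |u|<1,
$$
which follows immediately from $\int_0^1 e^{s\ln(1+u)}\,ds=((1+u)-1)/\ln(1+u)$. Expanding $(1+u)^s=\sum_k\binom{s}{k}u^k$ by the generalized binomial theorem and interchanging the sum and integral (justified by uniform convergence on any closed disk $|u|\le r<1$) yields
$$
b_k=\int_0^1\binom{s}{k}\,ds,\qquad\text{hence}\qquad c_k=(-1)^k\int_0^1\binom{s}{k}\,ds.
$$
Setting $k=0$ gives $c_0=\int_0^1 1\,ds=1$, as required.

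For $k\ge 1$, the factors $s-j$ with $1\le j\le k-1$ are nonpositive on $[0,1]$, so
$$
\biggl|\binom{s}{k}\biggr|=\frac{s(1-s)(2-s)\cdots(k-1-s)}{k!}.
$$
Applying the crude estimates $s\le 1$, $1-s\le 1$, and $(j-s)\le j$ for $2\le j\le k-1$, the numerator is at most $(k-1)!$, whence $|\binom{s}{k}|\le 1/k$. Integrating over $[0,1]$ gives $|c_k|\le 1/k\le 1$, which is the claimed bound.

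I do not anticipate any serious obstacle. The only point requiring a little care is the term-by-term integration of the binomial series, but this is standard once one restricts to a closed disk strictly inside the unit disk. It is worth noting that the argument actually yields the stronger estimate $|c_k|\le 1/k$, so the stated bound $|c_k|\le 1$ is quite loose; only the weaker form is needed in the sequel.
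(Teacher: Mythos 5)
Your proof is correct, but it takes a genuinely different route from the paper. The paper's proof is a one-liner: it cites the known identity $\sum_{k\ge 1}|c_k|=1$ (equivalently, the absolute Gregory coefficients sum to $1$) from reference~\cite{10}, from which $|c_k|\le 1$ is immediate. Your argument is self-contained: you substitute $u=-x$, use the integral representation $u/\ln(1+u)=\int_0^1(1+u)^s\,ds$, and expand by the generalized binomial theorem to obtain $c_k=(-1)^k\int_0^1\binom{s}{k}\,ds$, then bound $\bigl|\binom{s}{k}\bigr|\le 1/k$ on $[0,1]$. This gives the stronger estimate $|c_k|\le 1/k$ without any outside reference, which is a real gain in terms of being self-contained; the paper's citation, on the other hand, encodes a sharper fact (the $\ell^1$-norm equals $1$ exactly, and the true decay is $|c_k|\asymp 1/(k\ln^2 k)$), but at the cost of relying on the literature. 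One cosmetic point: when you say "the numerator is at most $(k-1)!$" this reads most cleanly if you note separately that for $k=1$ the product is just $s\le 1$, and for $k\ge 2$ you bound $s\le 1$, $1-s\le 1$, and $j-s\le j$ for $2\le j\le k-1$; as written this is what you do, and it checks out.
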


\begin{proof}
This immediately follows from the well-known identity (see \cite{10}):
\begin{equation}
\label{eq3}
\sum_{k=1}^{+\infty}|c_k| = 1.
\end{equation}
The proof of the lemma is complete.
\end{proof}

\begin{remark}
The numbers
$$
G_n = (-1)^n c_n,\qquad n\ge 1,
$$
 are called Gregory coefficients. For more precise estimates of the Gregory coefficients, see ~\cite{10}.
\end{remark}

\begin{lemma}
\label{l2}
For $0<\varepsilon<1$ and $d\ge 1$ we have
\begin{equation}
\label{eq4}
\sum_{p\,|\,d}\frac{1}{p^{1-\varepsilon}}\le \frac{2\omega(d)^{\varepsilon}}{\varepsilon},
\end{equation}
where $\omega(d)$ is the prime divisor function.
\end{lemma}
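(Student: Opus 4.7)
The plan is to reduce the sum to an integral after ordering the prime divisors by size. Write $k=\omega(d)$ and let $p_1<p_2<\dots<p_k$ denote the distinct prime divisors of $d$. The key elementary observation is that the $i$-th prime is at least $i+1$ (immediate from the enumeration $2,3,5,7,\dots$), so $p_i\ge i+1$. Since $t\mapsto t^{\varepsilon-1}$ is decreasing, this gives
$$
\sum_{p\,|\,d}\frac{1}{p^{1-\varepsilon}}=\sum_{i=1}^{k}p_i^{\varepsilon-1}\le \sum_{i=1}^{k}(i+1)^{\varepsilon-1}.
$$

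Next I would compare the remaining sum with an integral. Monotonicity of $t^{\varepsilon-1}$ on $[i,i+1]$ yields $(i+1)^{\varepsilon-1}\le \int_i^{i+1}t^{\varepsilon-1}\,dt$, hence
$$
\sum_{i=1}^{k}(i+1)^{\varepsilon-1}\le \int_1^{k+1}t^{\varepsilon-1}\,dt=\frac{(k+1)^\varepsilon-1}{\varepsilon}\le \frac{(k+1)^\varepsilon}{\varepsilon}.
$$

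To match the right-hand side of \eqref{eq4}, observe that for $k\ge 1$ we have $k+1\le 2k$, so
$$
(k+1)^\varepsilon\le (2k)^\varepsilon=2^\varepsilon k^\varepsilon\le 2k^\varepsilon,
$$
using $0<\varepsilon<1$. Putting the pieces together yields $\sum_{p\,|\,d}p^{\varepsilon-1}\le 2k^\varepsilon/\varepsilon=2\omega(d)^\varepsilon/\varepsilon$. The edge case $d=1$ (so $k=0$) is trivial since both sides vanish.

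No step looks genuinely difficult; the argument is purely a sorting-plus-integral-comparison exercise. The only place that deserves a sanity check is the prime-indexing bound $p_i\ge i+1$, and the factor of $2$ on the right-hand side is precisely what absorbs the harmless constant $2^\varepsilon\le 2$ produced in the last step.
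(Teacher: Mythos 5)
Your proof is correct, but it takes a genuinely different route from the paper's. You sort the prime divisors, use the crude prime-counting bound $p_i\ge i+1$, and then compare the resulting decreasing sum with an integral; the factor $2$ in the bound absorbs $2^\varepsilon$ coming from $k+1\le 2k$. The paper instead uses a Rankin-style splitting: it writes $\sum_{p\mid d}p^{\varepsilon-1}=\sum_{p\le X}+\sum_{p>X}$, bounds the low range by $\int_1^X u^{\varepsilon-1}\,du\le X^\varepsilon/\varepsilon$, bounds the tail trivially by $\omega(d)/X^{1-\varepsilon}$, and then optimizes by taking $X=\omega(d)$; there the factor $2$ comes from $\omega(d)^\varepsilon\le\omega(d)^\varepsilon/\varepsilon$. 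Your approach is arguably cleaner: it treats all $d\ge2$ uniformly (only $d=1$ is a trivial separate case), whereas the paper's choice $X=\omega(d)$ requires $\omega(d)\ge 2$ and hence an explicit case split for $d$ a prime power. Both arguments are elementary and yield the same constant; the paper's parameter-splitting is more flexible if one wanted to trade the constant against a different power of $\omega(d)$, while yours makes the monotonicity-plus-ordering structure of the sum transparent.
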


\begin{proof}
This is clear if $d$ is prime power or $d = 1$. Assume that $\omega(d)\ge 2$, then for $X\ge 2$ we have
\begin{align*}
\sum_{p\,|\,d}\frac{1}{p^{1-\varepsilon}} &= \biggl( \sum_{p\,|\,d,\,p \le X} + \sum_{p\,|\,d,\,p > X}\biggr) \frac{1}{p^{1-\varepsilon}}\le \sum_{p\,|\,d,\,p \le X}\frac{1}{p^{1-\varepsilon}} + \frac{1}{X^{1-\varepsilon}}\sum_{p\,|\,d}1
\\
&\le \sum_{2\le n \le x}\frac{1}{n^{1-\varepsilon}} + \frac{\omega(d)}{X^{1-\varepsilon}}\le \int_1^X\frac{du}{u^{1-\varepsilon}} + \frac{\omega(d)}{X^{1-\varepsilon}}\le \frac{X^\varepsilon}{\varepsilon}+\frac{\omega(d)}{X^{1-\varepsilon}}.
\end{align*}
Choosing $X = \omega(d)\ge 2$ we conclude the proof.
\end{proof}

We define the function $G_d(s)$ by the equality
\begin{equation}
\label{eq5}
G_d(s) = H(s)J_d(s).
\end{equation}
Here
\begin{gather}
H(s)=\frac{1}{s}{\sqrt{\zeta(s)(s-1)}\, \prod_{p}\sqrt{p^{2s}-p^s}\, \ln\frac{p^s}{p^s-1}},
\nonumber
\\
\label{eq6}
J_d(s)=\prod_{p\,|\,d}\biggl(p^s\ln\frac{p^s}{p^s-1}\biggr)^{-1},
\end{gather}
where we choose the principal branches of $\sqrt{z}$ and $\ln z$. It is well-known (see \ \cite[chapter IV, \S\,3, Theorem 1]{11}) that the Riemann zeta-function has no zeroes in the region
\begin{equation}
\label{eq7}
\sigma \ge 1 - \frac{c_0}{(\ln |t|)^{2/3}(\ln \ln |t|)^{1/3}},\qquad t\ge t_0,
\end{equation}
for some $c_0>0$. Thus, the function $G_d(s)$ is regular in the domain \eqref{eq7}.

\begin{lemma}
\label{l3}
Set
\begin{equation}
\label{eq8}
\varepsilon_d = \bigl( 3\ln(\omega(d)+2)\bigr)^{-1}.
\end{equation}
If $\operatorname{Re} s\ge 1-\varepsilon_d/2$ and $l\ge 0$, then for the $l$-th derivative of $J_d(s)$, defined in \eqref{eq6}, one has
$$
J^{(l)}_d(s)\ll_l(\omega(d)+2)^{10}.
$$
\end{lemma}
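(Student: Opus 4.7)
The plan is to expand each local factor of $J_d(s)$ into a Dirichlet series via Lemma~\ref{l1}, to bound $|J_d(s)|$ pointwise on a slightly enlarged disk by means of Lemma~\ref{l2}, and then to transfer this bound to derivatives by Cauchy's integral formula.

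Using the identity $\ln(p^s/(p^s-1)) = -\ln(1-p^{-s})$ and the substitution $y=p^{-s}$, the expansion \eqref{eq2} gives, for $\operatorname{Re} s>0$,
$$
\frac{1}{p^s\ln(p^s/(p^s-1))} = \frac{y}{-\ln(1-y)} = 1+\sum_{k=1}^{+\infty}c_k\,p^{-ks}.
$$
Hence $J_d(s)$ is regular in the half-plane $\operatorname{Re} s>0$. Fix $s_0$ with $\operatorname{Re} s_0\ge 1-\varepsilon_d/2$ and consider the closed disk $D=\{s:|s-s_0|\le\varepsilon_d/2\}$, on which $\sigma:=\operatorname{Re} s\ge 1-\varepsilon_d$. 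Since $p^{-k\sigma}\le p^{-\sigma}$ for $k\ge 1$, the relation $\sum_{k\ge 1}|c_k|=1$ from \eqref{eq3} together with $|c_k|\le 1$ (Lemma~\ref{l1}) implies
$$
\biggl|\frac{1}{p^s\ln(p^s/(p^s-1))}\biggr|\le 1+p^{-\sigma},\qquad |J_d(s)|\le\exp\biggl(\sum_{p\,|\,d}p^{-\sigma}\biggr).
$$

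Applying Lemma~\ref{l2} with $\varepsilon=1-\sigma\le\varepsilon_d$ and using the identity $\varepsilon_d\ln(\omega(d)+2)=1/3$ from \eqref{eq8}, one gets
$$
\sum_{p\,|\,d}p^{-\sigma}\le\frac{2\omega(d)^{\varepsilon_d}}{\varepsilon_d}\le\frac{2e^{1/3}}{\varepsilon_d}=6e^{1/3}\ln(\omega(d)+2),
$$
so $|J_d(s)|\le(\omega(d)+2)^{6e^{1/3}}$ uniformly on $D$ (the degenerate cases $\omega(d)\le 1$ being immediate). Cauchy's integral formula over $\partial D$ then yields
$$
|J_d^{(l)}(s_0)|\le\frac{l!}{(\varepsilon_d/2)^l}\sup_{s\in D}|J_d(s)|\ll_l\bigl(\ln(\omega(d)+2)\bigr)^l(\omega(d)+2)^{6e^{1/3}},
$$
and the logarithmic factor is absorbed into $(\omega(d)+2)^{10-6e^{1/3}}$ since $6e^{1/3}<10$.

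The main technical point is the calibration of $\varepsilon_d$: Cauchy's estimate costs a factor $\varepsilon_d^{-1}\asymp\ln(\omega(d)+2)$ per differentiation, while the pointwise supremum on the enlarged disk is essentially $\exp(O(1/\varepsilon_d))$ by Lemma~\ref{l2}. The choice $\varepsilon_d=(3\ln(\omega(d)+2))^{-1}$ in \eqref{eq8} is precisely what renders both contributions polynomial in $\omega(d)+2$, with a combined exponent comfortably below $10$.
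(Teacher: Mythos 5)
Your argument is correct in substance and reaches the stated bound, but it takes a noticeably different route from the paper. The paper never invokes Cauchy's integral formula: it writes $J_d(s)=\sum_{\delta\,|\,d^\infty}j(\delta)\delta^{-s}$, differentiates term-by-term to get $J_d^{(l)}(s)\ll\sum_{\delta\,|\,d^\infty}(\ln\delta)^l\delta^{-\sigma}$, absorbs the derivative cost via the elementary inequality $(\ln\delta)^l/\delta^{\varepsilon_d/2}\le(2l/(e\varepsilon_d))^l$, and then bounds the remaining Euler product with Lemma~\ref{l2}. Your approach instead bounds $|J_d(s)|$ pointwise on a disk of radius $\varepsilon_d/2$ and transfers to derivatives by Cauchy's estimate. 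The underlying mechanism is the same --- in both proofs the $l$-fold differentiation costs $\varepsilon_d^{-l}\asymp(\ln(\omega(d)+2))^l$, and in both the pointwise/Euler-product bound is $\exp(O(1/\varepsilon_d))=(\omega(d)+2)^{O(1)}$, so the calibration \eqref{eq8} plays exactly the same role --- but your presentation is shorter and avoids the combinatorial bookkeeping on the Dirichlet coefficients.

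One small misstep to fix. You write ``Applying Lemma~\ref{l2} with $\varepsilon=1-\sigma\le\varepsilon_d$'' and conclude $\sum_{p\,|\,d}p^{-\sigma}\le 2\omega(d)^{\varepsilon_d}/\varepsilon_d$. This does not follow: for $\sigma\ge 1$ the quantity $1-\sigma$ is not in the range $(0,1)$ required by Lemma~\ref{l2}, and for $0<1-\sigma<\varepsilon_d$ the bound $2\omega(d)^{\varepsilon}/\varepsilon$ is in fact \emph{decreasing} in $\varepsilon$ near $0$, so you cannot replace $\varepsilon=1-\sigma$ by the larger value $\varepsilon_d$. The correct (and equally short) argument is to observe that $\sigma\ge 1-\varepsilon_d$ and $p\ge 2$ give $p^{-\sigma}\le p^{-(1-\varepsilon_d)}$, and \emph{then} apply Lemma~\ref{l2} with $\varepsilon=\varepsilon_d$ fixed; this yields $\sum_{p\,|\,d}p^{-\sigma}\le 2\omega(d)^{\varepsilon_d}/\varepsilon_d\le 6e^{1/3}\ln(\omega(d)+2)$ as you claim, and the rest of the proof goes through unchanged.
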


\begin{proof}
We have,
$$
J_d(s) = \prod_{p\,|\,d}\frac{p^{-s}}{-\ln(1-p^{-s})} = \prod_{p\,|\,d}(1+c_1p^{-s}+c_2p^{-2s}+\cdots),
$$
where the coefficients $c_k$ are defined in \eqref{eq2}. Hence, we obtain
$$
J_d(s) = \sum_{\delta\,|\,d^\infty}\frac{j(\delta)}{\delta^s},
$$
where the symbol $\delta\,|\, d^\infty$ means the summation over those numbers whose prime factors divide $d$, and $j(\delta)$ is the multiplicative function, for which $j(p^k) = c_k$. Thus, we obtain
$$
J_d^{(l)}(s) = \sum_{\delta\,|\,d^{\infty}}\frac{j(\delta)(-\ln\delta)^l}{\delta^s}\ll \sum_{\delta\,|\,d^{\infty}}\frac{(\ln\delta)^l}{\delta^\sigma},
$$
where $s = \sigma+it$. Note that
$$
\frac{(\ln \delta)^l}{\delta^\varepsilon}\le \biggl(\frac{l}{e}\biggr)^l\frac{1}{\varepsilon^l},
$$
for $\varepsilon > 0$, $l\ge 1$ and $\delta \ge 1$. Hence, choosing $\varepsilon = \varepsilon_d/2$ and using inequality $\sigma\ge 1-\varepsilon_d/2$, we find that
$$
J_d^{(l)}(s)\ll_l\frac{1}{\varepsilon_d^l}\prod_{p\,|\,d}\biggl(1+\frac{1}{p^{1-\varepsilon_d}} +\frac{1}{p^{2(1-\varepsilon_d)}}+\cdots\biggr).
$$
Note that the obtained inequality is also holds for $l = 0$. Since
$$
2(1-\varepsilon_d)\ge 2\biggl( 1-\frac{1}{3\ln 2}\biggr)>1,
$$
it follows from Lemma \ref{l2} that
\begin{align*}
J_d^{(l)}(s) &\ll\frac{1}{\varepsilon_d^l}\prod_{p\,|\,d} \biggl(1+\frac{1}{p^{1-\varepsilon_d}}\biggr)\le \frac{1}{\varepsilon_d^l}\exp\biggl(\sum_{p\,|\,d}\frac{1}{p^{1-\varepsilon_d}}\biggr) \ll\frac{1}{\varepsilon_d^l}\exp\biggl(\frac{2\omega(d)^{\varepsilon_d}}{\varepsilon_d}\biggr)
\\
&\ll_l(\omega(d)+2)^{6\exp(1/3)}\bigl(\ln(\omega(d)+2)\bigr)^l\ll_l\bigl(\omega(d)+2\bigr)^{10}.
\end{align*}
This concludes the proof.
\end{proof}

\begin{lemma}
\label{l4}
Let $d\le x$ be an integer and $m\ge 0$ be a fixed number, then
\begin{equation}
\label{eq9}
\sum_{\substack{k\le x\\(k,d)=1}}\frac{1}{\tau(k)}=\frac{x}{\sqrt{\pi\ln x}}\sum_{k=0}^m\frac{(-1)^k\binom{2k}{k}}{4^k}\,\frac{ G_d^{(k)}(1)}{(\ln x)^k}+R_m(x;d),
\end{equation}
where $G_d(s)$ is defined in \eqref{eq5}, and
$$
R_m(x;d)\ll_m \kappa(d)\frac{x}{(\ln x)^{m+3/2}},\qquad \kappa(d) = (\omega(d)+2)^{10}.
$$
\end{lemma}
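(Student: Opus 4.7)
The plan is to reduce Lemma~\ref{l4} to a Hankel-type contour integral around the branch point $s=1$ of an appropriate Dirichlet series. First I would identify this series. Since $\tau(p^j)=j+1$, the identity $\sum_{j\ge 0}x^j/(j+1)=-\ln(1-x)/x$ from Lemma~\ref{l1} gives, for $\operatorname{Re}(s)>1$,
$$
D_d(s):=\sum_{(k,d)=1}\frac{1}{\tau(k)k^s}=\prod_{p\nmid d}\bigl(-p^s\ln(1-p^{-s})\bigr).
$$
Pairing each Euler factor with $\sqrt{1-p^{-s}}$ and using $\prod_p(1-p^{-s})^{-1/2}=\sqrt{\zeta(s)}$ extracts the square-root singularity at $s=1$ and yields, after direct comparison with \eqref{eq5}--\eqref{eq6}, the factorisation
$$
D_d(s)=\frac{s\,G_d(s)}{\sqrt{s-1}},\qquad G_d(s)=H(s)J_d(s),
$$
with $G_d$ holomorphic throughout the zero-free region \eqref{eq7}.

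Next I would apply the truncated Perron formula at $c=1+1/\ln x$ and a suitable height $T$ to obtain
$$
\sum_{\substack{k\le x\\(k,d)=1}}\frac{1}{\tau(k)}=\frac{1}{2\pi i}\int_{c-iT}^{c+iT}\frac{G_d(s)x^s}{\sqrt{s-1}}\,ds+E_{\mathrm{Per}}(x),
$$
and deform the contour into a Hankel-type path $\Gamma$ that tightly encircles $s=1$ and escapes into the zero-free region along the curve $\operatorname{Re}(s)=1-c_0/(\ln|t|)^{2/3}(\ln\ln|t|)^{1/3}$, joined to the vertical line $\operatorname{Re}(s)=c$ by two horizontal bridges. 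Near $s=1$ I would use the Taylor decomposition
$$
G_d(s)=\sum_{k=0}^{m}\frac{G_d^{(k)}(1)}{k!}(s-1)^k+(s-1)^{m+1}\widetilde G_{d,m}(s),
$$
valid on a disc of radius $\varepsilon_d/2$ by Lemma~\ref{l3}. Each polynomial piece is evaluated via the substitution $s-1=w/\ln x$ and the Hankel representation $1/\Gamma(z)=(2\pi i)^{-1}\int_{H}e^{w}w^{-z}\,dw$, giving
$$
\frac{1}{2\pi i}\int_{\Gamma}\frac{(s-1)^k x^s}{\sqrt{s-1}}\,ds=\frac{x}{(\ln x)^{k+1/2}\,\Gamma(1/2-k)}.
$$
The elementary identity $\Gamma(1/2-k)=(-1)^k 4^k k!\sqrt{\pi}/(2k)!$ converts the sum over $k$ into precisely the main term of \eqref{eq9}.

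The error analysis is the technical crux and splits into three parts: (i) the Perron truncation error, handled by the trivial bound $|a_n|\le 1/\tau(n)\le 1$ and the standard estimate; (ii) the contributions from the horizontal bridges and from the distant parts of $\Gamma$, negligible because $x^{s}$ decays on the shifted line while $\zeta(s)$ grows only logarithmically in the zero-free region; and (iii) the Taylor remainder, where Lemma~\ref{l3} supplies $|J_d^{(l)}(s)|\ll_l(\omega(d)+2)^{10}$ uniformly for $\operatorname{Re}(s)\ge 1-\varepsilon_d/2$, and combined with standard derivative bounds on $H(s)$ yields $|\widetilde G_{d,m}(s)|\ll_m(\omega(d)+2)^{10}$ on the disc $|s-1|\le\varepsilon_d/2$. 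Integrating this bound against $(s-1)^{m+1-1/2}x^{s}$ along $\Gamma$ produces exactly the error $O_m(\kappa(d)x/(\ln x)^{m+3/2})$. The main obstacle is this last step: one must propagate the uniformity in $d$ through every part of the contour so that the final error depends on $d$ only through $\kappa(d)=(\omega(d)+2)^{10}$. Lemma~\ref{l3} is tailor-made to make this possible, and once its bounds are in place the remaining estimates are routine.
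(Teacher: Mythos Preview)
Your proposal is correct and follows essentially the same Selberg--Delange-type argument as the paper: Perron's formula, contour deformation around the branch point $s=1$, Taylor expansion of $G_d$ there, evaluation via the Gamma function, and Lemma~\ref{l3} to control the $d$-dependence uniformly. The only cosmetic difference is that the paper uses a rectangle with a horizontal cut $[a,1]$ (with $a=1-c_0(\ln T)^{-2/3}(\ln\ln T)^{-1/3}$, $T=e^{(\ln x)^{3/5}}$), reduces the two sides of the cut to the real integral $\frac{x}{\pi}\int_0^{1-a}G_d(1-u)x^{-u}u^{-1/2}\,du$, and then computes $\int_0^\infty u^{k-1/2}x^{-u}\,du=\Gamma(k+1/2)/(\ln x)^{k+1/2}$, whereas you invoke the complex Hankel formula for $1/\Gamma(1/2-k)$; these are equivalent formulations, and the paper's explicit choice of a fixed left abscissa $a$ (rather than the curved zero-free boundary you describe) is what makes the verification $a\ge 1-\varepsilon_d/2$ and the truncation of the Gamma integral clean.
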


\begin{proof}
Consider the generating series
$$
F_d(s)=\sum_{\substack{n=1\\(n,d)=1}}^{+\infty}\frac{1}{\tau(n)}\, n^{-s}.
$$
Then we have
\begin{align*}
F_d(s) &=\prod_{p\,\nmid\, d} \biggl(1+\frac{1}{2}p^{-s}+\frac{1}{3}p^{-2s}+\cdots\biggr)=\prod_{p\,\nmid\, d} p^s\ln\frac{p^s}{p^s-1}
\\
&=\frac{\prod_{p}\bigl(p^s \ln(p^s/(p^s-1))(1-p^{-s})^{1/2} (1-p^{-s})^{-1/2}\bigr)}{\prod_{p\,|\,d}\bigl(p^s\ln(p^s/(p^s-1))\bigr)}
\\
&=\frac{\sqrt{\zeta(s)}\,\prod_{p}\bigl(\sqrt{p^{2s}-p^s}\, \ln(p^s/(p^s-1))\bigr)}{\prod_{p\,|\,d}\bigl(p^s\ln(p^s/(p^s-1))\bigr)},
\end{align*}
where $\sqrt{z}>0$ for $z>0$. Using Perron's formula (see\ \cite[chapter IV, \S\,1, Theorem~1]{12}) for $T, x \ge 2$ and $b=1+1/\ln x$, we obtain
$$
\sum_{\substack{k\le x\\(k,d)=1}}\frac{1}{\tau(k)}=j+O\biggl(\frac{x\ln x}{T}\biggr),
$$
where
\begin{gather*}
j=\frac{1}{2\pi i}\int_{b-iT}^{b+iT}F_d(s)\frac{x^s}{s}\, ds=\frac{1}{2\pi i}\int_{b-iT}^{b+iT}\frac{G_d(s)x^s}{\sqrt{s-1}}\, ds,
\\
G_d(s)\,{=}\,\frac{1}{s}\sqrt{\zeta(s)(s\,{-}\,1)}\prod_{p} \biggl(\sqrt{p^{2s}\,{-}\,p^s}\ln\frac{p^s}{p^s\,{-}\,1}\biggr) \prod_{p\,|\,d}\biggl(p^s\ln\frac{p^s}{p^s\,{-}\,1}\biggr)^{-1}
{=}\,H(s) J_d(s).
\end{gather*}
Put
$$
a=1-{c_0}{(\ln T)^{-2/3}}(\ln\ln T)^{-1/3},
$$
where $c_0$ is chosen as in~\eqref{eq7}. Consider the rectangle $\Gamma$ at the vertices $a\pm iT$, $b\pm iT$ with horizontal cut going straight from the point~$s = a$ to the~point~$s = 1$. Then by the Cauchy's theorem
\begin{align*}
\frac{1}{2\pi i}\int_{\Gamma}\frac{G_d(s)x^s}{\sqrt{s-1}}\, ds &=\frac{1}{2\pi i}\biggl(\int_{b-iT}^{b+iT}+\int_{b+iT}^{a+iT}+\int_{a+iT}^{a+i0}
\\
&\qquad+\int_{a+i0}^{1+i0}+\int_{1-i0}^{a-i0}+\int_{a-i0}^{a-iT}+\int_{a-iT}^{b-iT}\biggr) \frac{G_d(s)x^s}{\sqrt{s-1}}\, ds
\\
&=j+j_1+j_2+j_3+j_4+j_5+j_6=0
\end{align*}
(where definition of the symbols $j_1, \dots, j_6$ is obvious), thus
$$
j=-(j_3+j_4)-j_1-j_2-j_5-j_6.
$$
Let us calculate $J=-(j_3+j_4)$. We have
\begin{align*}
j_3&=\frac{1}{2\pi i}\int_{a+i0}^{1+i0}\frac{G_d(s)x^s}{\sqrt{s-1}}\, ds=\frac{1}{2\pi i}\int_{a}^{1}\frac{G_d(\sigma)x^\sigma}{\sqrt{\sigma-1+i0}} \, d\sigma
\\
&=\frac{1}{2\pi i}\int_{0}^{1-a}\frac{G_d(1-u)x^{1-u}}{\sqrt{u}\, \sqrt{-1+i0}}\, du =-\frac{x}{2\pi}\int_{0}^{1-a}\frac{G_d(1-u)x^{-u}}{\sqrt{u}}\, du.
\end{align*}
Similarly, we obtain
$$
j_4=-\frac{x}{2\pi}\int_{0}^{1-a}\frac{G_d(1-u)x^{-u}}{\sqrt{u}}\, du,
$$
hence,
$$
J=\frac{x}{\pi}\int_{0}^{1-a}\frac{G_d(1-u)x^{-u}}{\sqrt{u}}\, du.
$$
From the Taylor formula with Lagrange's remainder, we obtain
$$
G_d(1-u)=\sum_{k=0}^m (-1)^k G_d^{(k)}(1)\frac{u^k}{k!} +O_m\bigl(G_d^{(m+1)}(\theta)u^{m+1}\bigr),\qquad a\le1-u\le\theta \le1.
$$
Hence,
\begin{align*}
J &=\frac{x}{\pi}\int_{0}^{1-a}\sum_{k=0}^m\frac{G_d^{(k)}(1)(-1)^k}{k!}\, \frac{u^k x^{-u}}{\sqrt{u}}\, du +O_m\biggl(x G_{m+1} \int_{0}^{1-a}u^{m+1/2}x^{-u}\, du\biggr)
\\
&=\frac{x}{\pi}\sum_{k=0}^m\frac{(-1)^k G_d^{(k)}(1)j_k(a)}{k!} +O_m\bigl(x G_{m+1} j_{m+1}(a) \bigr),
\end{align*}
where
\begin{gather*}
G_r = \max_{a\le\theta \le 1}|G_d^{(r)}(\theta)|,\qquad r\le m+1,
\\
j_k(a)\,{=} \int_{0}^{1-a}u^{k-1/2}x^{-u}\, du \,{=} \int_{0}^{+\infty}u^{k-1/2}x^{-u}\, du- \int_{1-a}^{+\infty}u^{k-1/2}x^{-u}\, du \,{=}\, J_k-r_k.
\end{gather*}
Further, we choose $T = e^{(\ln x)^\alpha}$, $\alpha>0$. 
Since $\omega(d)\ll \ln x$ for $x\ge x_0$, it follows that $a\ge 1-\varepsilon_d/2$, where the value $\varepsilon_d$ is defined in~\eqref{eq8}.

 Therefore, for $a\le \theta\le 1$ we have
$$
G_d^{(r)}(\theta) = \sum_{l=0}^r\binom{r}{l}H^{(r-l)}(\theta)J_d^{(l)}(\theta) \ll_m \sum_{l=0}^r|J_d^{(l)}(\theta)|\ll_m \bigl(\omega(d)+2\bigr)^{10},
$$
so that for $r\le m+1$ we get
\begin{equation}
\label{eq10}
G_r\ll_m \bigl(\omega(d)+2\bigr)^{10}.
\end{equation}
For the value $J_k$ we have
\begin{align*}
J_k &=\int_{0}^{+\infty}u^{k-1/2}x^{-u}\, du=\frac{1}{(\ln x)^{k+1/2}} \int_{0}^{+\infty}w^{k-1/2}e^{-w}\, dw
\\
&=\frac{\Gamma(k+1/2)}{(\ln x)^{k+1/2}}=\sqrt{\pi}\, \binom{2k}{k}\frac{k!}{4^k}\, \frac{1}{(\ln x)^{k+1/2}}.
\end{align*}
Further,
$$
r_k=\int_{1-a}^{+\infty}u^{k-1/2}x^{-u}\, du=\frac{1}{(\ln x)^{k+1/2}}\int_{(1-a)\ln x}^{+\infty}w^{k-1/2}e^{-w}\, dw.
$$
Using the estimate
$$
I_{k}(\lambda)=\int_{\lambda}^{+\infty}w^{k-1/2}e^{-w}\, dw\ll k!\, e^{-\lambda}\lambda^{k-1/2},\ \ (\lambda >1),
$$
which is obtained by iterated integration by parts, we find that
\begin{align*}
r_k &\ll \frac{k!}{(\ln x)^{k+1/2}}e^{(a-1)\ln x}(1-a)^{k-1/2}(\ln x)^{k-1/2}
\\
&= \frac{k!\, x^{a-1}c_0^{k-1/2}}{(\ln T)^{(2/3)(k-1/2)}(\ln\ln T)^{(1/3)(k-1/2)}\ln x},
\end{align*}
where the constant in the symbol $\ll$ is absolute. Thus,
$$
j_k(a)=\sqrt{\pi}\,\binom{2k}{k}\frac{k!}{4^k}\, \frac{1}{(\ln x)^{k+1/2}}+O\biggl(\frac{k!\, x^{a-1}c_0^{k-1/2}}{(\ln T)^{(2/3)(k-1/2)}(\ln\ln T)^{(1/3)(k-1/2)}\ln x} \biggr)
$$
and
\begin{align*}
J &=\frac{x}{\sqrt{\pi\ln x}}\sum_{k=0}^m\frac{(-1)^k\binom{2k}{k}}{4^k}\, \frac{G^{(k)}(1)}{(\ln x)^{k}}
\\
&\qquad+O_m\biggl(\frac{x^a}{\ln x}\sum_{k=0}^m\frac{|G^{(k)}(1)|}{(\ln T)^{(2/3)(k-1/2)}(\ln\ln T)^{(1/3)(k-1/2)}}\biggr)
\\
&\qquad+O_m\biggl( G_{m+1}\frac{x}{(\ln x)^{m+3/2}} \biggr).
\end{align*}
Finally, from the estimate \eqref{eq10}, we get
\begin{align*}
J &=\frac{x}{\sqrt{\pi\ln x}} \sum_{k=0}^m\frac{(-1)^k\binom{2k}{k}}{4^k}\, \frac{G^{(k)}(1)}{(\ln x)^{k}}
\\
&\qquad + O_m\biggl( \kappa_d\frac{x^a(\ln T)^{1/3}(\ln\ln T)^{1/6}}{\ln x} + \kappa_d\frac{x}{(\ln x)^{m+3/2}}\biggr).
\end{align*}
Now we estimate the integrals $j_1$, $j_2$, $j_5$, $j_6$. It is well-known that 
$$
\zeta(\sigma+it)\ll(\ln|t|)^{2/3},
$$
where $\sigma\ge 1-c/(\ln t)^{2/3}$, $c>0$, $|t|\ge 10$,
(see\ \cite[chapter~IV, \S\,2, p.~3, Theorem~2]{11}). Then using this estimate, Lemma \ref{l3}, and the inequality
$$
\prod_{p}\sqrt{p^{2s}-p^s}\, \ln\frac{p^s}{p^s-1} = \prod_{p}\biggl(1-\frac{1}{24p^{2s}}-\frac{1}{24p^{3s}}-\cdots \biggr) \ll 1,
$$
which holds for $3/4\le \operatorname{Re} s\le 2$, we find that
\begin{align*}
j_1 &=\frac{1}{2\pi i} \int_{b+iT}^{a+iT}\frac{1}{s}\sqrt{\zeta(s)}\, \prod_{p}\biggl(\sqrt{p^{2s}-p^s}\ln\frac{p^s}{p^s-1}\biggr) \prod_{p\,|\,d}\biggl(p^s\ln\frac{p^s}{p^s-1}\biggr)^{-1}x^s\, ds
\\
&\ll \bigl(\omega(d)+2\bigr)^{10}\,\frac{x(\ln T)^{1/3}}{T}.
\end{align*}
Similarly, we obtain
$$
j_6\ll\bigl(\omega(d)+2\bigr)^{10}\,\frac{x(\ln T)^{1/3}}{T}.
$$
Further,
\begin{align*}
j_2+j_5 &=\frac{1}{2\pi i} \int_{a+iT}^{a-iT}\frac{1}{s}\sqrt{\zeta(s)}\, \prod_{p}\biggl(\sqrt{p^{2s}-p^s}\,\ln\frac{p^s}{p^s-1}\biggr) \prod_{p\,|\,d}\biggl(p^s\ln\frac{p^s}{p^s-1}\biggr)^{-1}x^s\, ds
\\
&\ll\bigl(\omega(d)+2\bigr)^{10}(\ln T)^{1/3} x^a \int_{-T}^T\frac{dt}{\sqrt{a^2+t^2}}\ll \bigl(\omega(d)+2\bigr)^{10}(\ln T)^{4/3} x^a.
\end{align*}
Choosing $T=e^{(\ln x)^{3/5}}$ we conclude the proof. 
\end{proof}

\begin{remark}

Using Lemma \ref{l6} and some estimates following from inequality \eqref{eq4}, one can improve the dependence on $d$ in the remainder term in~\eqref{eq9}.
\end{remark}

Denote by $I(s)$ the logarithmic derivative of the function $J_d(s)$ defined in~\eqref{eq6}, then we have
\begin{equation}
\label{eq11}
\frac{J_{d}^{'}(s)}{J_d(s)} = I(s),
\end{equation}
where
\begin{equation}
\label{eq12}
I(s) = \sum_{p\,|\,d} f(s;p) = \sum_{p\,|\,d}\biggl\{\frac{\ln p}{(p^s-1)\ln(p^s/(p^s-1))}-\ln p\biggr\}.
\end{equation}

\begin{lemma}
\label{l5}
Let $f(s;p)$ be defined in~\eqref{eq12}, then for $m\ge 0 $ we have
$$
\frac{d^m}{ds^m}f(1;p)\ll_{m}\frac{(\ln p)^{m+1}}{p}.
$$
\end{lemma}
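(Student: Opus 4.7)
The plan is to rewrite $f(s;p)$ as an absolutely convergent Dirichlet series in $p^{-s}$ whose coefficients are controlled by Lemma~\ref{l1}, differentiate termwise, and bound the resulting sum.

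First, I would set $x = p^{-s}$, so that $p^s - 1 = (1-x)/x$ and $\ln(p^s/(p^s-1)) = -\ln(1-x)$. With this substitution,
$$
f(s;p) = \ln p\,\biggl(\frac{x}{(1-x)(-\ln(1-x))}-1\biggr).
$$
Invoking Lemma~\ref{l1}, one has $x/(-\ln(1-x)) = \sum_{k\ge 0} c_k x^k$ with $c_0 = 1$ and $|c_k|\le 1$ for $k\ge 1$; moreover $\sum_{k\ge 1}|c_k| = 1$ by \eqref{eq3}. Multiplying by $(1-x)^{-1}=\sum_{j\ge 0} x^j$ and collecting powers yields
$$
\frac{x}{(1-x)(-\ln(1-x))} = \sum_{n\ge 0} a_n x^n,\qquad a_n = \sum_{k=0}^n c_k,
$$
where $a_0 = 1$ and $|a_n|\le 1+\sum_{k\ge 1}|c_k| = 2$. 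Subtracting $1$ kills the $n=0$ term, giving the uniformly (on $\operatorname{Re} s\ge 1$) absolutely convergent expansion
$$
f(s;p) = \ln p\,\sum_{n\ge 1} a_n p^{-ns}.
$$

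Next, because of this absolute convergence (with room to spare in a strip around $\operatorname{Re} s = 1$), termwise differentiation is valid, and
$$
\frac{d^m}{ds^m}f(s;p) = (-1)^m (\ln p)^{m+1}\sum_{n\ge 1} a_n n^m p^{-ns}.
$$
Evaluating at $s=1$ and using $|a_n|\le 2$,
$$
\biggl|\frac{d^m}{ds^m}f(1;p)\biggr| \le 2(\ln p)^{m+1}\sum_{n\ge 1} n^m p^{-n}.
$$
Finally, the elementary identity $\sum_{n\ge 0}\binom{n+m}{m}x^n = (1-x)^{-m-1}$ together with $n^m\le m!\binom{n+m}{m}$ gives
$$
\sum_{n\ge 1} n^m p^{-n}\le \frac{m!}{p}\,\frac{1}{(1-1/p)^{m+1}}\le \frac{2^{m+1}m!}{p}\ll_m \frac{1}{p}
$$
for every prime $p\ge 2$, which completes the bound.

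There is no serious obstacle here: the only delicate point is the manipulation that reduces $f(s;p)$ to a Dirichlet series with coefficients $a_n$ bounded by~$2$, and once Lemma~\ref{l1} is in place, everything else is routine. If a sharper explicit constant were needed, one could track the Gregory coefficient bounds of~\cite{10} through the partial sums $a_n$, but for the stated $\ll_m$ estimate the crude bound $|a_n|\le 2$ is enough.
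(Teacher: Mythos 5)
Your proof is correct and follows essentially the same route as the paper: both expand $1/\bigl((p^s-1)\ln(p^s/(p^s-1))\bigr)$ as a power series in $p^{-s}$ whose coefficients are the partial sums $\sum_{j\le n}c_j$ of the Gregory-type coefficients, bound them by $2$ via identity \eqref{eq3}, and then differentiate termwise and estimate at $s=1$. The paper names these coefficients $d_k$ and singles out the $k=1$ term (using $d_1=1/2$) before bounding the tail, whereas you bound all terms $n\ge 1$ uniformly; this is purely cosmetic, and your treatment of the final geometric-type sum is fine (the cleanest way to get the stated inequality is $n^m\le m!\binom{n-1+m}{m}$, which recovers the factor $1/p$ exactly).
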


\begin{proof}
Let us define the sequence $d_k$ from the expansion
$$
\frac{-x}{(1-x)\ln(1-x)} = \sum_{k=0}^{+\infty}d_k x^k,\qquad |x|<1,
$$
then $d_0=1$ and for $k\ge 1$ we have $d_k = c_0+c_1+\dots+c_k$, where the coefficients $c_k$ are defined in~\eqref{eq2}. From \eqref{eq3} we find that $|d_k|\le 2$, where $k\ge 1$. Further, we have
$$
\frac{1}{(p^s-1)\ln(p^s/(p^s-1))} = \frac{p^{-s}}{(1-p^{-s})(-\ln(1-p^{-s}))} = \sum_{k = 0}^{+\infty}\frac{d_k}{p^{ks}}.
$$
Hence, since $d_1 = 1/2$, it follows that
$$
f(s;p) = \frac{\ln p}{2 p^s}+\sum_{k=2}^{+\infty}\frac{d_k \ln p}{p^{ks}}.
$$
Taking the $m$-th derivative, we get
$$
f^{(m)}(s;p) = (-1)^m\frac{(\ln p)^{m+1}}{2 p^s} + (-1)^m (\ln p)^{m+1}\sum_{k=2}^{+\infty}\frac{d_k k^m}{p^{ks}},\ m\geqslant 0.
$$
Then for $s=1$ inequality $|d_k|\le 2$ implies that
\begin{align*}
f^{(m)}(1;p) &= (-1)^m\frac{(\ln p)^{m+1}}{2 p} + (-1)^m (\ln p)^{m+1}\sum_{k=2}^{+\infty}\frac{d_k k^m}{p^{k}}
\\
&=(-1)^m\frac{(\ln p)^{m+1}}{2 p } +O_{m, \varepsilon}\biggl( \frac{1}{p^{2-\varepsilon}}\biggr)\ll_m\frac{(\ln p)^{m+1}}{p}.
\end{align*}
The proof of the lemma is complete.
\end{proof}

\begin{lemma}
\label{l6}
Let $J_d(s)$ be defined in~\eqref{eq6} and $I(s)$ be defined in~\eqref{eq12}, then for $l\ge 1$ the following representation holds
$$
J_{d}^{(l)}(s) = J_d(s) Q_l,
$$
where
$Q_l = Q_l(I, I', \dots, I^{(l-1)})$~ is a polynomial in $l$ variables of degree $l$ with integer coefficients.
\end{lemma}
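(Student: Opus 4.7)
The plan is induction on $l$. Base case $l=1$: by \eqref{eq11} we have $J_d'(s)=J_d(s)I(s)$, so $Q_1(X_0)=X_0$, which is a degree-one polynomial in one variable with integer coefficients.

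For the inductive step, assume
$$
J_d^{(l)}(s) = J_d(s)\,Q_l\bigl(I(s),I'(s),\dots,I^{(l-1)}(s)\bigr),
$$
where $Q_l$ is a polynomial of degree $l$ in $l$ variables with integer coefficients. Differentiating both sides and using \eqref{eq11} to substitute $J_d'(s)=J_d(s)I(s)$ gives
$$
J_d^{(l+1)}(s) = J_d(s)\biggl(I(s)\,Q_l + \frac{d}{ds}Q_l\biggr).
$$
By the chain rule,
$$
\frac{d}{ds}Q_l\bigl(I,I',\dots,I^{(l-1)}\bigr) = \sum_{k=0}^{l-1}\frac{\partial Q_l}{\partial X_k}\bigl(I,\dots,I^{(l-1)}\bigr)\cdot I^{(k+1)},
$$
so the right-hand side can be written as a polynomial in $I,I',\dots,I^{(l)}$. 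Declaring this polynomial to be $Q_{l+1}$ gives the required representation.

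It remains to check the two stated properties of $Q_{l+1}$. Integer coefficients: both formation of $X_0\cdot Q_l$ and partial differentiation with respect to one of the variables $X_k$ preserve the integrality of the coefficients, as does multiplication by the new variable $X_{k+1}$; addition of such expressions keeps the coefficients integral. Degree: if we assign weight one to each variable $X_k$, then $Q_l$ has degree $l$, so $X_0\cdot Q_l$ has degree $l+1$, while each term $(\partial Q_l/\partial X_k)\cdot X_{k+1}$ has degree $(l-1)+1 = l$. Hence $Q_{l+1}$ has degree exactly $l+1$ in the $l+1$ variables $X_0,\dots,X_l$, completing the induction.

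No serious obstacle is expected; the only point that requires a moment's care is bookkeeping the degree after differentiating $Q_l$, since one must ensure that the introduction of the new variable $I^{(l)}$ (via $I^{(k+1)}$ with $k=l-1$) does not exceed degree $l+1$.
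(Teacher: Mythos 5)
Your proof is correct and takes essentially the same approach as the paper: induction on $l$, differentiate $J_d^{(l)}=J_dQ_l$ using $J_d'=J_dI$ and the chain rule, and track degrees. The paper phrases the degree bookkeeping by explicitly writing $Q_l=I^l+R_l$ with $\deg R_l\le l-1$, which is equivalent to your observation that the term $X_0Q_l$ alone contributes the degree-$(l+1)$ part.
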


\begin{proof}
We prove that $Q_l$ has the form
$$
Q_l = I^l + R_l,
$$
where
$$
R_l = R_l(I, I', \dots, I^{(l-1)})\in\mathbb{Z}[I, I', \dots, I^{(l-1)}]
$$
and $\deg R_l\le l-1$.

The proof is by induction on $l$. According to \eqref{eq11} for $l = 1$ we have $J_{d}'(s) = J_d(s)I$.
Hence, we obtain that $R_1 = 0$, so the lemma is true in this case.

Assume that the lemma is true for $l = r\ge 1$, let us prove it for $l = r+1$. We have
$$
J_d^{(r+1)}(s) = J_d(s)(I^{r+1} + IR_r + rI^{r-1}I' + R_{r}') = J_d(s)(I^{r+1} + R_{r+1}).
$$
Since $\deg R'_r\le r-1 $, $\deg I R_r\le r$ and $\deg I^{r-1}I' = r$, it follows that
$$
\deg R_{r+1}\le r.
$$
The claim follows.
\end{proof}

Let $g(n)$~ be a multiplicative function such that for any prime~$p$
\begin{equation}
\label{eq13}
0\le g(p) <1.
\end{equation}
Further, let $h(n)$~ be a multiplicative function such that for any prime $p$
\begin{equation}
\label{eq14}
h(p) = \frac{g(p)}{1-g(p)}.
\end{equation}
For $z\ge 2$ and $a\ge 1$ define $H_a = H_a(z)$ to be the sum
\begin{equation}
\label{eq15}
H_a = \sum_{\substack{k\le z\\ (k,a) = 1}}\mu^2(k)h(k).
\end{equation}
Also we need the product $P_a(z)$, defined as follows
\begin{equation}
\label{eq16}
P_a(z) = \prod_{\substack{p\le z \\ (p,a)=1}}p.
\end{equation}
Finally, consider the sequence
\begin{equation}
\rho_d = \frac{\mu(d)h(d)}{H_a g(d)}\sum_{\substack{k\le z/d\\ kd\,|\,P_a(z)}}\mu^2(k)h(k),\qquad d\ge 1.
\label{eq17}
\end{equation}
According to Selberg's sieve method (see\ \cite[ch.~3]{13}, also see\ \cite[ch.~7]{14}), this coefficients minimize the quadratic form
$$
B = \sum_{d_1,d_2\,|\,P_a(z)}\rho_{d_1}\rho_{d_2}g([d_1,d_2])
$$
of real variables $\rho_{d_1}$, $\rho_{d_2}$, for which $\rho_1 = 1$. The coefficients $\rho_d$ are defined in \eqref{eq17} satisfy the following properties: $B = 1/H_a(z)$, $|\rho_d|\le 1$, $\rho_d = 0$ for $d>z$ or $d\nmid P_a(z)$. Below we prove some lemmas about these coefficients.
\begin{lemma}
\label{l7}
Let $g(n)$ and $h(n)$ be multiplicative functions, for which conditions \eqref{eq13}, \eqref{eq14} hold, and $H_a$ be defined in~\eqref{eq15}. Further, let $p$ be a prime number, and $a\ge 1$ be an integer. Then for $(d, p)= 1$ and $(dp, a) = 1$ we have
$$
\rho_{dp} = -\rho_d +\frac{\mu(d)h(d)}{H_ag(d)}\sum_{\substack{z/(dp)<k\le z/d\\(k, dp a) = 1}}\mu^2(k)h(k).
$$
\end{lemma}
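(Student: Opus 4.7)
The plan is to expand the definition of $\rho_{dp}$, use the multiplicativity of $\mu$, $h$, and $g$ to pull out the $p$-factor, and then match what remains against $\rho_d$ by splitting the sum in \eqref{eq17} according to whether $p\mid k$ or $p\nmid k$.

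First I would note three algebraic facts that will do all the work. Since $(d,p)=1$ and $p$ is prime, multiplicativity gives $\mu(dp)=-\mu(d)$, $h(dp)=h(d)h(p)$ and $g(dp)=g(d)g(p)$. Second, from the definition \eqref{eq14} we have
$$
1+h(p)=1+\frac{g(p)}{1-g(p)}=\frac{1}{1-g(p)}=\frac{h(p)}{g(p)},
$$
so $h(p)/g(p)=1+h(p)$; this is the identity that will allow a clean telescoping. Third, the condition $kdp\mid P_a(z)$ is equivalent to $k$ squarefree with $(k,dpa)=1$, because $dp$ is itself squarefree and coprime to $a$.

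Using these facts, I would write $\rho_{dp}$ as
$$
\rho_{dp}=\frac{-\mu(d)h(d)}{H_a g(d)}\,(1+h(p))\sum_{\substack{k\le z/(dp)\\ (k,dpa)=1}}\mu^2(k)h(k).
$$
Separately, I would split the inner sum in $\rho_d$ into the part $(k,dpa)=1$ and the complementary part $p\mid k$, $(k,da)=1$. In the latter, the substitution $k=pm$ (legitimate since $p$ must be squarefree in $k$) turns that sub-sum into $h(p)\sum_{m\le z/(dp),\,(m,dpa)=1}\mu^2(m)h(m)$. Finally, splitting the range $k\le z/d$ in the remaining piece into $k\le z/(dp)$ and $z/(dp)<k\le z/d$, I get
$$
\sum_{\substack{k\le z/d\\ (k,da)=1}}\mu^2(k)h(k)=(1+h(p))\sum_{\substack{k\le z/(dp)\\ (k,dpa)=1}}\mu^2(k)h(k)+\sum_{\substack{z/(dp)<k\le z/d\\ (k,dpa)=1}}\mu^2(k)h(k).
$$
(Throughout, $\mu^2$ freely enforces squarefreeness.)

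Multiplying this identity by $\mu(d)h(d)/(H_a g(d))$ and comparing with the expression for $\rho_{dp}$ above, the $(1+h(p))$-factors cancel and one obtains
$$
\rho_{dp}=-\rho_d+\frac{\mu(d)h(d)}{H_a g(d)}\sum_{\substack{z/(dp)<k\le z/d\\ (k,dpa)=1}}\mu^2(k)h(k),
$$
which is the claim. There is no real obstacle here; the only thing to watch carefully is the bookkeeping of the coprimality conditions, ensuring that "$kd\mid P_a(z)$" is unpacked consistently as squarefreeness together with $(k,da)=1$, and likewise for $kdp$.
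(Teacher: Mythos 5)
Your proof is correct and follows essentially the same route as the paper: expand $\rho_{dp}$ via \eqref{eq17} and multiplicativity, use $h(p)/g(p)=1+h(p)$, and relate the two inner sums by splitting according to $p\mid k$ versus $p\nmid k$ and truncating the range at $z/(dp)$. The paper organizes this by writing the $\rho_{dp}$-sum $S$ in terms of itself and solving, whereas you split the $\rho_d$-sum and compare directly; these are the same computation in slightly different bookkeeping.
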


\begin{proof}
Using \eqref{eq17}, we obtain
\begin{equation}
\rho_{dp} = -\frac{h(p)}{g(p)}\, \frac{\mu(d)h(d)}{H_ag(d)}\sum_{\substack{l\le z/(dp) \\(l, dp a) = 1}} \mu^2(l) h(l) = -\frac{h(p)}{g(p)}\, \frac{\mu(d)h(d)}{H_ag(d)} S.
\label{eq18}
\end{equation}
For the sum $S$ we have
\begin{align*}
S &= \sum_{\substack{l\le z/d \\ (l, dpa) = 1}}\mu^2(l)h(l) - \sum_{\substack{z/(dp) <l\le z/d \\ (l, dpa) = 1}}\mu^2(l)h(l)
\\
&= \sum_{\substack{l\le z/d \\ (l, da) = 1}}\mu^2(l)h(l) - \sum_{\substack{l\le z/d \\ (l, da) = 1 \\ p\,|\,l}}\mu^2(l)h(l) - R,
\end{align*}
where
$$
R = \sum_{\substack{z/(dp) < l \le z/d\\(l, dp a) = 1}} \mu^2(l) h(l).
$$
Further, we have
$$
\sum_{\substack{l\le z/d \\(l, d a) = 1 \\ p\,|\,l}} \mu^2(l) h(l) = h(p)\sum_{\substack{k\le z/(dp) \\(kp,\,d a) = 1 \\ (k, p)= 1}} \mu^2(k) h(k) = h(p) \sum_{\substack{k\le z/(dp)\\(k, dp a) = 1}} \mu^2(k) h(k) = h(p) S.
$$
Hence,
$$
S = \frac{1}{1+ h(p)}\Biggl( \sum_{\substack{l\le z/d\\(l, d a) = 1}} \mu^2(l) h(l) - R\Biggr).
$$
Substituting the obtained equality for \eqref{eq18} and using that for a prime $p$ one has
$$
\frac{h(p)}{g(p)} = 1+ h(p),
$$
we conclude the proof.
\end{proof}

\begin{lemma}
\label{l8}
Suppose that functions $g(n)$ and $h(n)$ are satisfied the conditions \eqref{eq13} and \eqref{eq14} respectively, and let $H_a$ be defined in~\eqref{eq15}. Further, let $p_1, \dots, p_k$ be distinct primes and 
$$
(d, p_1\dotsb p_k) = (dp_1\dotsb p_k, a) = 1.
$$
Then
$$
\rho_{d p_1\dotsb p_k} = (-1)^k\biggl( \rho_d - \frac{\mu(d)h(d)}{H_a g(d)}R\biggr),
$$
where
$$
R = \sum_{m=1}^k\biggl( \prod_{i = 1}^{m-1}\frac{h(p_i)}{g(p_i)}\biggr) \sum_{\substack{z/(d \alpha_m)< l \le z/(d \alpha_{m-1}) \\ (l,\,d \alpha_m a) = 1}} \mu^2(l) h(l),\qquad \alpha_m = \prod_{i = 1}^{m} p_i.
$$
\end{lemma}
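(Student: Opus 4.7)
The plan is to prove the statement by induction on $k$. The base case $k=1$ coincides with Lemma~\ref{l7}: with the convention $\alpha_0 = 1$, the sum defining $R$ collapses to a single term (the empty product being $1$), which is exactly the tail sum in Lemma~\ref{l7}, and the sign $(-1)^1$ reproduces the formula $\rho_{dp} = -\rho_d + \frac{\mu(d)h(d)}{H_a g(d)}\sum\cdots$.

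For the inductive step, suppose the formula holds for $k$ and write $d' = d p_1 \cdots p_k$. The coprimality hypothesis $(d p_1 \cdots p_{k+1}, a) = 1$ together with $(d, p_1\cdots p_{k+1}) = 1$ and pairwise distinctness of the $p_i$ forces $(d', p_{k+1}) = 1$ and $(d' p_{k+1}, a) = 1$, so Lemma~\ref{l7} applies with $(d', p_{k+1})$ in place of $(d, p)$. This gives
$$
\rho_{d' p_{k+1}} = -\rho_{d'} + \frac{\mu(d')h(d')}{H_a g(d')} S_{k+1},
\qquad
S_{k+1} = \sum_{\substack{z/(d\alpha_{k+1})<l\le z/(d\alpha_k) \\ (l,\,d\alpha_{k+1}a)=1}} \mu^2(l) h(l).
$$
Using multiplicativity on coprime factors, $\mu(d') = (-1)^k \mu(d)$, $h(d')/g(d') = (h(d)/g(d))\prod_{i=1}^k h(p_i)/g(p_i)$, so the prefactor in front of $S_{k+1}$ becomes
$(-1)^k \frac{\mu(d)h(d)}{H_a g(d)}\prod_{i=1}^k \frac{h(p_i)}{g(p_i)}$.

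Substituting the induction hypothesis for $\rho_{d'}$, the two contributions combine into $(-1)^{k+1}\rho_d$ plus $(-1)^k \frac{\mu(d)h(d)}{H_a g(d)}$ times the bracket $R^{(k)} + \prod_{i=1}^k (h(p_i)/g(p_i))\,S_{k+1}$. But this bracket is precisely $R^{(k+1)}$: the first $k$ terms of $R^{(k+1)}$ are exactly the terms of $R^{(k)}$ (the inner sums depend only on $\alpha_m$ with $m\le k$), and the $m=k+1$ term matches $\prod_{i=1}^k (h(p_i)/g(p_i))\,S_{k+1}$. Re-absorbing $(-1)^k = -(-1)^{k+1}$ yields
$$
\rho_{d' p_{k+1}} = (-1)^{k+1}\Bigl(\rho_d - \frac{\mu(d)h(d)}{H_a g(d)} R^{(k+1)}\Bigr),
$$
completing the induction.

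The only obstacle is careful bookkeeping: verifying that the coprimality conditions $(l, d\alpha_m a)=1$ in the definition of $R$ match those arising from applying Lemma~\ref{l7} to $d'$ (the condition $(l, d' p_{k+1} a) = 1$ translates to $(l, d \alpha_{k+1} a) = 1$ because $d\alpha_{k+1} = d' p_{k+1}$), and that the summation range $z/(d' p_{k+1}) < l \le z/d'$ is exactly $z/(d\alpha_{k+1}) < l \le z/(d\alpha_k)$. Once the indexing is set up correctly, the algebraic cancellation of signs and the telescoping of the $R$-sum are automatic.
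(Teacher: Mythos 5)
Your proof is correct and follows essentially the same inductive scheme as the paper: induction on $k$ with Lemma~\ref{l7} providing the base case and the single-prime step. The only cosmetic difference is the order in the inductive step---you peel off the last prime $p_{k+1}$ (apply the induction hypothesis to $d\mapsto d\alpha_k$ and then Lemma~\ref{l7} to $d\alpha_k \mapsto d\alpha_{k+1}$), whereas the paper peels off the first prime $p_1$ (apply the induction hypothesis with base $dp_1$ and then Lemma~\ref{l7} to $d\mapsto dp_1$); the sign bookkeeping and the telescoping of the $R$-sum come out the same either way.
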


\begin{proof}
The proof is by induction on $k$. 
It follows from the previous lemma that this is true for $k=1$. Assume that the lemma is true for $k = r\ge 1$, and prove it for $k = r+1$. We have
$$
\rho_{dp_1\dotsb p_{r+1}} = (-1)^r \biggl( \rho_{d p_1} + \frac{\mu(d)h(d)}{H_a g(d)}\, \frac{h(p_1)}{g(p_1)}R \biggr),
$$
where
\begin{align*}
\frac{h(p_1)}{g(p_1)}R &= \frac{h(p_1)}{g(p_1)}\sum_{m=1}^r\biggl( \prod_{i = 2}^{m}\frac{h(p_i)}{g(p_i)}\biggr)\sum_{\substack{z/(d\alpha_{m+1})< l \le z/(d\alpha_{m}) \\ (l,d \alpha_{m+1} a) = 1}} \mu^2(l) h(l)
\\
&=\sum_{m=2}^{r+1}\biggl( \prod_{i = 1}^{m-1}\frac{h(p_i)}{g(p_i)}\biggr) \sum_{\substack{z/(d \alpha_{m})< l \le z/(d \alpha_{m-1}) \\ (l,\,d \alpha_{m} a) = 1}} \mu^2(l) h(l).
\end{align*}
Hence,
\begin{align*}
\rho_{dp_1\cdots p_{r+1}} &= (-1)^{r}\Biggl( -\rho_d+\frac{\mu(d)h(d)}{H_ag(d)}\sum_{\substack{z/(dp_1)<l\le z/d\\(l, dp_1 a) = 1}}\mu^2(l)h(l)
\\
&\qquad+\frac{\mu(d)h(d)}{H_ag(d)}\sum_{m=2}^{r+1}\biggl( \prod_{i = 1 }^{m-1}\frac{h(p_i)}{g(p_i)}\biggr) \sum_{\substack{z/(d \alpha_{m})< l \le z/(d \alpha_{m-1}) \\ (l,\,d \alpha_{m} a) = 1}} \mu^2(l) h(l)\Biggr).
\end{align*}
This concludes the proof.
\end{proof}

\begin{lemma}
\label{l9}
Let $P$, $M$, $q$, $a$, $\delta$~ be square-free integers satisfying the following conditions: $(q,M) = 1$; $qM\,|\,P$; $(P,a) = (P, \delta) = 1$. Let $h(n)$~ be a multiplicative function and $q = r_1r_2\cdots r_s$~ be the prime factorization of $q$. Set
\begin{align}
R_k(M,\delta) &= R_k(M,\delta,z,q,a)
\nonumber
\\
&=\sum_{\substack{d\le z \\ (d,a) = 1 \\ (d, P) = M\\ d\equiv\, 0\,(\operatorname{mod}\delta)}}\mu(d)h(d) \sum_{\substack{z/(d\alpha_k)< l \le z/(d\alpha_{k-1}) \\ (l, d\alpha_k a) = 1}} \mu^2(l) h(l),\qquad \alpha_k = \prod_{i = 1}^{k} r_i,
\label{eq19}
\end{align}
then
$$
R_k(M,\delta)\ll\mu^2(M\delta)h(M\delta)\prod_{p\,|\,P}(1+h(p)).
$$
In particular, if $P = p_1p_2 \cdots p_m$ is the product of $m$ distinct primes and $h(p)\ll 1$, then
$$
R_k(M,\delta)\ll_m \mu^2(M\delta)h(M\delta).
$$
\end{lemma}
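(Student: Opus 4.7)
The plan is to reduce $R_k(M,\delta)$ to a single sum over squarefree integers, exploit multiplicativity, and then bound via an Euler product. I would start by parametrizing $d = M\delta e'$: the conditions $(d,P) = M$ and $\delta\mid d$, together with the squarefreeness of $d$ forced by $\mu(d)\neq 0$, make $M\delta\mid d$ with $e' = d/(M\delta)$ squarefree and $(e', M\delta Pa) = 1$ (using $(M,\delta) = 1$, which follows from $M\mid P$ and $(P,\delta) = 1$). Factoring $\mu(d)h(d) = \mu(M\delta)\mu(e')\cdot h(M\delta)h(e')$ gives
\[
R_k(M,\delta) = \mu(M\delta)\,h(M\delta) \sum_{\substack{e' \le z/(M\delta) \\ (e', M\delta Pa) = 1}} \mu(e')\,h(e')\,\Sigma(e'),
\]
where $\Sigma(e')$ denotes the inner $l$-sum.

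Next I would take absolute values (using $|\mu|=\mu^2$) and combine $e'$ and $l$ into a single squarefree variable $m = e'l$, which is squarefree because $(e',l)=1$ is forced by the inner coprimality condition. For each admissible squarefree $m$, the number of factorisations $m = e'l$ satisfying the original constraints equals $2^{\omega((m,\bar P))}$: each prime $p\mid m$ with $p\mid P$ must lie in $l$ (since $(e',P) = 1$) and moreover must belong to $P\setminus M\alpha_k$ (so that $(l,M\alpha_k) = 1$), while each prime of $m$ coprime to $P$ is free to go to either factor. The combined range conditions localize $m$ to the interval $(z/(M\delta\alpha_k),\,z/(M\delta\alpha_{k-1})]$. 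Decomposing $m = A\cdot B$ with $A = (m,P)\mid P/(M\alpha_k)$ and $B = (m,\bar P)$ coprime to $P\delta a$, the $A$-sum collapses to
\[
\sum_{A\mid P/(M\alpha_k)}\mu^2(A)h(A) = \prod_{p\mid P/(M\alpha_k)}(1+h(p)) \le \prod_{p\mid P}(1+h(p)),
\]
which produces the product factor in the stated bound.

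The remaining task is to bound the residual $B$-sum $\sum_B \mu^2(B)\,2^{\omega(B)}h(B)$ over the short interval, and I expect this to be the main technical obstacle, since the combinatorial factor $2^{\omega(B)}$ produced by counting factorisations could in principle inflate the estimate. A Rankin-type Euler product bound, combined with the coprimality $(B,P\delta a) = 1$ and the narrow multiplicative length $r_k$ of the $B$-interval, controls this sum by $O(1)$. Assembling the pieces yields $|R_k(M,\delta)| \ll \mu^2(M\delta)\,h(M\delta)\prod_{p\mid P}(1+h(p))$, as claimed. The ``in particular'' statement follows at once from $\prod_{p\mid P}(1+h(p)) \le (1 + O(1))^m \ll_m 1$ when $P$ has exactly $m$ prime factors and $h(p) \ll 1$.
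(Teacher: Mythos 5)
Your opening reduction $d = M\delta e'$, the factoring $\mu(d)h(d) = \mu(M\delta)\mu(e')\,h(M\delta)h(e')$, and the merge $m = e'l$ are all correct; after the substitution $n = M\delta m$ you are at the paper's own change of variable $n = dl$. The step that breaks the argument is ``take absolute values.'' Keeping the sign of $\mu(e')$, the sum over factorisations of a fixed admissible $m$ is a M\"obius sum,
$$
\sum_{\substack{e'\mid m\\ (e',P)=1}}\mu(e') \ =\ \mathbf{1}\bigl(m\mid P\bigr),
$$
and this indicator annihilates every $m$ whose part $B$ coprime to $P$ exceeds $1$; only $m\mid P/M$ survive, and $\sum_{m\mid P/M}\mu^2(m)h(m)\le\prod_{p\mid P}(1+h(p))$ finishes the proof with no residual $B$-sum and no $z$ anywhere. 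That is exactly the paper's proof: it sets $n=dl$, evaluates $W_n(M,\delta)=\sum_{d\mid n,\,(d,P)=M,\,\delta\mid d}\mu(d)$ in closed form as $\mu(M\delta)\,\mathbf{1}(n/\delta\mid P)$, and the remaining $n$-sum runs only over divisors of $\delta P$.

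Once you pass to $\mu^2(e')$, the indicator becomes the count $2^{\omega(B)}$ and the residual $B$-sum is genuinely \emph{not} $O(1)$. The lemma makes no decay assumption on $h$; already for $h\equiv1$ on squarefree integers (admissible here) the $B$-sum is $\sum_{T<B\le Tr_k}\mu^2(B)\,2^{\omega(B)}\asymp T\log T$, while the lemma asserts $R_k\ll 2^{\omega(P)}$, a bound independent of $z$. Even for the specific $h$ of the application one has $h(p)\ge 1/p$, so $h(B)\ge 1/B$ for squarefree $B$ and the $B$-sum is at least $\sum_{T<B\le Tr_k}\mu^2(B)2^{\omega(B)}/B\gg\log T\cdot\log r_k$, which still grows with $z$. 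No Rankin-type or short-interval device can fix this: the M\"obius cancellation you discarded is the entire content of the lemma, and the sign sum must be evaluated exactly before any size estimate is made.
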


\begin{proof}
Since $(l,d)=1$ in the sum \eqref{eq19}, then introducing the notation $n=dl$, we obtain
$$
R_k(M,\delta) = \sum_{\substack{z/\alpha_k< n \le z/\alpha_{k-1}}} \mu^2(n) h(n) \sum_{\substack{d\,|\,n \\ d\le z \\ (d,a)=1 \\ (d, P) = M\\ d\equiv 0\,(\operatorname{mod} \delta )\\ (n/d, d \alpha_k a) = 1 }}\mu(d), \qquad \alpha_k = \prod_{i = 1}^{k} r_i.
$$
Since $d\equiv 0\,(\operatorname{mod} \delta )$ we have $n\equiv 0\,(\operatorname{mod} \delta) $. Since $(d,a) = 1$ and $(n/d,a)=1$, it follows that $(n,a)=1$. Further, since $(d,P) = M$ we have $n\equiv 0\,(\operatorname{mod} M) $. Finally, note that $(d, \alpha_k) = \bigl( d, \prod_{i=1}^{k} r_i \bigr) = 1$. Assume the converse, if $r_i\,|\,(d,\alpha_k)$, then, on the one hand, $r_i\,|\,q$ and $r_i\,|\,P$; on the other hand , $r_i\,|\,(d,P) = M$. Since $(M, q) = 1$, this is a contradiction. Since $(n/d, \alpha_k) =1 $, it follows that $(n, \alpha_k) = 1$. Thus, we get
\begin{align}
R_k(M,\delta) &= \sum_{\substack{z/\alpha_k< n \le z/\alpha_{k-1} \\ n\equiv 0\,(\operatorname{mod} M\delta) \\ (n, a\alpha_k )= 1}} \mu^2(n) h(n) \sum_{\substack{d\,|\,n \\ d\le z \\ (d,a)=1 \\ (d, P) = M\\ d\equiv 0\,(\operatorname{mod} \delta )\\ (n/d, d \alpha_k a) = 1 }}\mu(d)
\nonumber
\\
&=\sum_{\substack{z/\alpha_k< n \le z/\alpha_{k-1} \\ n\equiv 0\,(\operatorname{mod} M\delta) \\ (n, a\alpha_k )= 1}} \mu^2(n) h(n) \sum_{\substack{d\,|\,n \\ (d, P) = M\\ d\equiv 0\,(\operatorname{mod} \delta )}}\mu(d)
\nonumber
\\
&= \sum_{\substack{z/\alpha_k< n \le z/\alpha_{k-1} \\ n\equiv 0\,(\operatorname{mod} M\delta) \\ (n, a\alpha_k )= 1}} \mu^2(n) h(n) W_n(M,\delta).
\label{eq20}
\end{align}
Since $M\,|\,P$ and $(\delta, P) = 1$, we have
$$
W_n(M,\delta) = \sum_{\substack{d\,|\,n \\ (d, P) = M\\ d\equiv 0\,(\operatorname{mod} \delta )}}\mu(d) = \mu(M) \sum_{\substack{d\,|\,(n/M) \\ (d, P/M) = 1\\ (d, M)=1 \\ d\equiv 0\,(\operatorname{mod} \delta ) }}\mu(d) = \mu(M)\sum_{\substack{d\,|\,n \\ (d, P) = 1 \\ d\equiv 0\,(\operatorname{mod} \delta ) }}\mu(d),
$$
where we used the fact that the condition $d\,|\,(n/M)$ is equivalent to $d\,|\,n$ and $(d,M) = 1$. Hence, we get
$$
W_n(M,\delta)\,{=}\, \mu(M)\!\sum_{\substack{d\,|\,n \\ d\equiv 0\,(\operatorname{mod} \delta )}}\mu(d)\sum_{\Delta\,|\,(d,P)}\mu(\Delta) =\mu(M) \!\sum_{\Delta\,|\,(n,P)} \mu(\Delta) \sum_{\substack{d\,|\,n \\ d\equiv 0\,(\operatorname{mod} \delta ) \\ d\equiv 0\,(\operatorname{mod} \Delta ) }}\mu(d).
$$
Further, since $\Delta\,|\,P$ and $(\delta,P) = 1$, we get $(\delta,\Delta) = 1$, whence
\begin{align*}
W_n(M,\delta) &= \mu(M) \sum_{\Delta\,|\,(n,P)}\mu(\Delta)\sum_{\substack{d\,|\,n \\ d\equiv 0\,(\operatorname{mod} \delta\Delta ) }}\mu(d)
\\
&= \mu(M) \sum_{\Delta\,|\,(n,P)}\mu(\Delta)\sum_{d\,|\,(n/(\delta\Delta))}\mu(\delta\Delta d)
\\
&= \mu(M\delta) \sum_{\Delta\,|\,(n,P)}\mu^2(\Delta)\sum_{d\,|\,(n/(\delta\Delta))}\mu(d)
\\
&=\mu(M\delta) \sum_{\Delta\,|\,(n,P)}\mu^2(\Delta)\, \mathbf{1}\biggl( \Delta = \frac{n}{\delta}\biggr) = \mu(M\delta)\, \mathbf{1}\biggl( \frac{n}{\delta}\biggm|P\biggr),
\end{align*}
where we use the notation
$$
\mathbf{1}(A) = \begin{cases}
1, &\text{if condition }A\text{ holds},
\\
0 &\text{otherwise}.
\end{cases}
$$
Substituting the resulting expression of $W_n(M,\delta)$ for~\eqref{eq20}, we obtain
\begin{align*}
R_k(M,\delta) &= \mu(M\delta) \sum_{\substack{z/\alpha_k< n \le z/\alpha_{k-1} \\ n\equiv 0\,(\operatorname{mod} \delta M) \\ (n, a \alpha_k )= 1 \\ n/(\delta\,|\,P)}} \mu^2(n) h(n)
\\
&=\mu(M\delta) h(M\delta) \sum_{\substack{z/(\delta M\alpha_k)< n \le z/(\delta M\alpha_{k-1}) \\ (n, a \alpha_k \delta M)= 1 \\ n\,|\,(P/M)}} \mu^2(n) h(n)
\\
&\ll\mu^2(M\delta) h(M\delta) \sum_{n\,|\,P}\mu^2(n)h(n) = \mu^2(M\delta) h(M\delta) \prod_{p\,|\,P}(1+h(p)),
\end{align*}
giving the lemma.
\end{proof}

Now we prove the main lemma.

\begin{lemma}
\label{l10}
Let $m\ge 1$~ be an arbitrary fixed integer. Further, let $\{f\}_{i=1}^m$ be a set of functions such that for any prime $f_i(p)\ll_m (\ln p)^{m +1}/p$. Put
$$
T_m(z;a) = \sum_{\substack{p_1\le z\\p_1\,\nmid\,a}}f_1(p_1) \dots \sum_{\substack{p_m\le z \\p_m\,\nmid\, a}}f_m(p_m) \sum_{\substack{d_1, d_2 \,|\, P_a(z) \\ d_1, d_2\le z \\ [d_1, d_2]\equiv 0\,(\operatorname{mod} [p_1, \dots, p_m])}} \frac{\rho_{d_1}\rho_{d_2}}{\varphi([d_1,d_2])}J_{[d_1,d_2]}(1),
$$
where the function $J_d(s)$ is defined in~\eqref{eq6}, the coefficients $\rho_d$ are defined in~\eqref{eq17}, and the product $P_a(z)$ is defined in~\eqref{eq16}. Then the sum $T_m(z;a)$ satisfies the estimate
$$
T_m(z;a)\ll_m \frac{1}{H_a(z)},
$$
where $H_a(z)$ is defined in~\eqref{eq15}.
\end{lemma}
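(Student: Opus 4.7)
The plan is to reduce the inner double sum defining $T_m$ to the unrestricted Selberg quadratic form $\sum_{d_1,d_2}\rho_{d_1}\rho_{d_2}g([d_1,d_2]) = 1/H_a(z)$ by iterating the identities of Lemmas~\ref{l7}--\ref{l9}, and then to sum over $p_1,\ldots,p_m$ using the assumed decay of the $f_i$. Throughout, $g$ is the multiplicative function of~\eqref{eq13}--\eqref{eq14} used to define $\rho_d$ in~\eqref{eq17}; for the present application $g(p) = (p(p-1)\ln(p/(p-1)))^{-1}$, so $g(d) = J_d(1)/\varphi(d)$ on squarefree integers.

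First I would observe that every divisor of $P_a(z)$ is squarefree, so the inner double sum of $T_m$ equals the Selberg quadratic form restricted by $Q\mid[d_1,d_2]$, where $Q := [p_1,\ldots,p_m] = q_1\cdots q_s$ is the product of the distinct primes among $p_1,\ldots,p_m$ (so $s\le m$). For each admissible pair I would decompose uniquely $d_1 = A d_1'$ and $d_2 = B d_2'$ with $A, B \mid Q$ squarefree and $(d_1' d_2', Q) = 1$; the divisibility $Q\mid[d_1,d_2]$ then rewrites as $[A, B] = Q$. Applying Lemma~\ref{l8} to $\rho_{A d_1'}$ and $\rho_{B d_2'}$, and using the multiplicativity identity $g([A d_1', B d_2']) = g(Q)\,g([d_1', d_2'])$ (which follows from $(Q, d_1' d_2') = 1$ and $[A,B]=Q$), the leading contribution to the inner sum becomes
$$
g(Q)\Biggl(\sum_{[A,B]=Q}(-1)^{\omega(A)+\omega(B)}\Biggr)\tilde\Sigma(Q) = (-1)^s\, g(Q)\, \tilde\Sigma(Q),
$$
where $\tilde\Sigma(Q) = \sum_{(d_1'd_2', Q)=1}\rho_{d_1'}\rho_{d_2'}g([d_1',d_2'])$; the combinatorial factor is computed prime by prime, each $q_j$ contributing $-1-1+1=-1$.

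The main step is the bound $\tilde\Sigma(Q) \ll_m 1/H_a(z)$. I would establish this by removing the coprimality constraints $(d_i', q_j) = 1$ one prime at a time: inclusion--exclusion on whether $q_j$ divides $d_1$, $d_2$, both, or neither rewrites $\tilde\Sigma(Q)$ in terms of $\tilde\Sigma$ for the smaller product $Q/q_j$ together with sub-sums in which $\rho_{q_j d}$ occurs, and Lemma~\ref{l7} converts each such sub-sum into $\pm g(q_j)\tilde\Sigma$ modulo remainders controlled by Lemma~\ref{l9}. Solving the resulting linear relations accumulates a factor $\prod_{j}(1-g(q_j))^{-1} = \prod_j(1+h(q_j)) \ll_s 1$, yielding $\tilde\Sigma(Q) \ll_s 1/H_a(z)$. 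The error terms coming from the Lemma~\ref{l8} remainders $R_A$ and $R_B$ fit exactly the format of Lemma~\ref{l9} and, after summation in $d_1'$ and $d_2'$, contribute at most $O_m(g(Q)/H_a(z))$.

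To conclude, I would sum over $p_1,\ldots,p_m$: the inner sum is $\ll_m g(Q)/H_a(z)$, and since $g(q)\ll 1/q$, the hypothesis $f_i(p)\ll_m (\ln p)^{m+1}/p$ gives $\sum_p |f_i(p)|g(p) \ll_m \sum_p (\ln p)^{m+1}/p^2 = O_m(1)$. Coincidences among the $p_i$ only improve the bound (they produce a single factor $g(q_j)$ in place of two, while the power of $p$ in the denominator of the summand increases), so multiplying the $m$ convergent series yields $T_m(z;a) \ll_m 1/H_a(z)$, as required. The principal obstacle is the estimate $\tilde\Sigma(Q) \ll_s 1/H_a(z)$: because $\tilde\Sigma(Q)$ is not itself the optimum of any Selberg sieve associated to the sifted set $P_a(z)/Q$, the identity $B = 1/H$ cannot be invoked directly, and one must untangle the coprimality conditions via Lemma~\ref{l7} together with a delicate bookkeeping of the remainder terms supplied by Lemma~\ref{l9}.
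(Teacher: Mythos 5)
Your proposal follows the same basic strategy as the paper---reduce to distinct $p_i$, decompose via $A=(d_1,Q)$, $B=(d_2,Q)$ with $[A,B]=Q$, peel the $A$-, $B$-factors off $\rho_{Ad_1'}$, $\rho_{Bd_2'}$ using Lemma~\ref{l8}, and control the ensuing remainders by Lemma~\ref{l9}---but the organization differs in a way that matters for the error bookkeeping. The paper applies the Selberg diagonalization $g([d_1,d_2])=g(d_1)g(d_2)\sum_{\delta\,|\,(d_1,d_2)}1/h(\delta)$ at the outset and works with the single-variable sums $S_\delta(A)$; the identity $\sum_\delta x_\delta^2/h(\delta)=1/H_a(z)$ together with $(a+b)^2\ll a^2+b^2$ then absorbs the cross-terms between $S_\delta(1)$ and the Lemma~\ref{l9} remainder $O(\mu^2(\delta)h(\delta)/H_a)$ automatically. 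You instead factor $g([Ad_1',Bd_2'])=g(Q)g([d_1',d_2'])$ first, which cleanly pulls out $g(Q)$ and yields the pleasant combinatorial factor $\sum_{[A,B]=Q}(-1)^{\omega(A)+\omega(B)}=(-1)^s$, but the cross-terms between $\rho_{d_1'}$ and the remainder $R_B$ then lack a built-in square structure: one must bound $\sum_\delta|y_\delta|\le(\tilde\Sigma(Q)\,H_a)^{1/2}$, where $y_\delta=\sum_{\delta\,|\,d,\ (d,Q)=1}\rho_dg(d)$, and close a small bootstrap with the target inequality $\tilde\Sigma(Q)\ll_m 1/H_a(z)$. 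That target inequality is indeed, as you say, the crux; your Lemma~\ref{l7} iteration with factor $\prod_j(1-g(q_j))^{-1}$ is workable, but a shorter route is to observe $y_\delta=\sum_{\Delta\,|\,Q}\mu(\Delta)x_{\delta\Delta}$, apply Cauchy--Schwarz, and use $h(\Delta)\ll_m 1$ to get $\tilde\Sigma(Q)=\sum_{(\delta,Q)=1}y_\delta^2/h(\delta)\le\tau(Q)\max_{\Delta\,|\,Q}h(\Delta)\sum_{\delta'}x_{\delta'}^2/h(\delta')\ll_m 1/H_a(z)$ directly, bypassing Lemmas~\ref{l7} and~\ref{l9} for this sub-step. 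In short: correct plan, same ingredients, but the paper's order of operations (diagonalize, then decompose) keeps the remainder control mechanical, whereas yours (decompose, then diagonalize) requires the extra Cauchy--Schwarz/bootstrap you allude to but do not carry out.
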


\begin{proof}
Denote $g(d) = J_d(1)/\varphi(d)$, then
$$
g(p) = \frac{1}{p(p-1)\ln(1/(1-1/p))}
$$
and for any $p\ge 3$ we have
\begin{equation}
\label{eq21}
\frac{1}{p}\le g(p) \le \frac{1}{p-1}.
\end{equation}
Hence, $g(p)\le 1/2$ for $p\ge 3$. Then since $g(2) = 1/(2\ln{2})$, it follows that for any prime
\begin{equation}
\label{eq22}
0<g(p)\le \frac{1}{2\ln{2}}<1.
\end{equation}
Further, from the definition of $g(d)$ we have
$$
T_m(z;a) = \sum_{\substack{p_1\le z\\p_1\,\nmid\,a}}f_1(p_1) \dots \sum_{\substack{p_m\le z \\p_m\,\nmid\, a}}f_m(p_m) \sum_{\substack{d_1, d_2 \,|\, P_a(z) \\ d_1, d_2\le z \\ [d_1, d_2]\equiv 0\,(\operatorname{mod} [p_1, \dots, p_m])}}{\rho_{d_1}\rho_{d_2}}g([d_1, d_2]).
$$
Using the estimate \eqref{eq21} for $p\ge 3$, we find that
$$
\frac{1}{p-1}\le h(p)\le\frac{1}{p-2}.
$$
According to \eqref{eq14} we get
$$
g(p) = \frac{h(p)}{h(p)+1}.
$$
Hence, for the square-free $d$ we have
$$
\frac{1}{g(d)} = \prod_{p\,|\,d}\biggl(1+\frac{1}{h(p)}\biggr) = \sum_{\delta \,|\, d}\frac{1}{h(\delta)}.
$$
Since $g(n)$ is multiplicative and $d_1$ and $d_2$ are square-free, we have
$$
g([d_1, d_2]) = \frac{g(d_1)g(d_2)}{g((d_1, d_2))} = g(d_1)g(d_2)\sum_{\delta \,|\, (d_1 , d_2)}\frac{1}{h(\delta )}.
$$
Then, 
\begin{align*}
&T_m(z;a)
\\
&= \sum_{\substack{p_1\le z\\ p_1\,\nmid\,a}}f_1(p_1) \dots \sum_{\substack{p_m\le z \\ p_m\,\nmid\, a}}f_m(p_m)\sum_{\substack{\delta\,|\, P_a(z) \\ \delta\le z}}\frac{1}{h(\delta)}\sum_{\substack{d_1, d_2 \,|\, P_a(z) \\ d_1, d_2\le z\\ [d_1, d_2]\equiv 0\,(\operatorname{mod} [p_1, \dots, p_m]) \\ d_1, d_2 \equiv 0\,(\operatorname{mod} \delta)}}\rho_{d_1}g(d_1)\rho_{d_2}g(d_2).
\end{align*}
Without loss of generality, we may assume that all $p_1, p_2, \dots, p_m$ are pairwise distinct. Indeed, if $p_1 = p_2$, then
$$
f_1(p_1)f_2(p_1)\ll\frac{(\ln p_1)^{2m+2}}{p_1^2}\ll_m\frac{\ln p_1}{p_1},\qquad [p_1,p_2,\dots,p_m] = [p_1, p_3, \dots, p_m],
$$
and the estimation of the sum $T_m(z;a)$ reduces to the estimation of the sum of the same type, but with a~smaller value of the parameter $m$. Thus,
\begin{equation}
T_m(z;a) = \sum_{\substack{p_1\le z\\ p_1\,\nmid\,a} }f_1(p_1) \sum_{\substack{p_2\le z\\ p_2\,\nmid\,a \\ p_2\neq p_1} }f_2(p_2) \dots \sum_{\substack{p_m\le z\\ p_m\,\nmid\,a \\ p_m\notin \{p_1, p_2, \dots, p_{m-1}\}} }f_m(p_m)\times S,
\label{eq23}
\end{equation}
where
$$
S = \sum_{\substack{\delta\,|\, P_a(z) \\\delta\le z}}\frac{1}{h(\delta)}\sum_{\substack{d_1, d_2 \,|\,P_a(z) \\ d_1, d_2\le z\\ [d_1, d_2]\equiv\, 0\,(\operatorname{mod} p_1 \dotsb p_m) \\ d_1, d_2 \equiv\, 0\,(\operatorname{mod} \delta)}}\rho_{d_1}g(d_1)\rho_{d_2}g(d_2).
$$
Let $P_m = p_1\cdots p_m$. Note that the condition
$$
[d_1 , d_2] \equiv 0\,(\operatorname{mod} P_m)
$$
is equivalent to
$$
([d_1 , d_2], P_m) = [(d_1, P_m), (d_2, P_m) ] = P_m.
$$
Let $A = (d_1, P_m)$ and $B = (d_2, P_m)$. Then,
$$
\sum_{\substack{d_1, d_2 \,|\, P_a(z) \\ d_1, d_2\le z\\ [d_1, d_2]\equiv 0\,(\operatorname{mod} P_m) \\ d_1, d_2 \equiv 0\,(\operatorname{mod} \delta)}}\rho_{d_1}g(d_1)\rho_{d_2}g(d_2) = \sum_{A\,|\,P_m}\sum_{\substack{B\,|\,P_m \\ [A,B] = P_m}} S_{\delta}(A)S_{\delta}(B),
$$
where for an integer $N$ we put
$$
S_{\delta}(N) = \sum_{\substack{d\le z \\ d\,|\,P_a(z) \\ (d, P_m) = N \\ d\equiv 0\,(\operatorname{mod} \delta)}}\rho_d g(d).
$$
Further, let $D=(A, B)$, then
\begin{align*}
S &= \sum_{\substack{\delta\,|\, P_a(z) \\\delta\le z}}\frac{1}{h(\delta)}\sum_{A\,|\,P_m}\sum_{\substack{B\,|\,P_m \\ [A,B] = P_m}} S_{\delta}(A)S_{\delta}(B)
\\
&= \sum_{D \,|\, P_m} \sum_{\substack{A\,|\, P_m\\ A \equiv  0\,(\operatorname{mod} D)}} \sum_{\substack{\delta\,|\, P_a(z)\\ \delta\le z}}\frac{1}{h(\delta)} S_\delta(A) S_{\delta}(B),
\end{align*}
where the values $A$, $B$ and $D$ are satisfy the relation $A B = {D P_m}$.

Note that $(\delta,P_m/D)=1$. Indeed, if there exists a prime $p$ such that $p\,|\, (\delta,P_m/D)$, then $p\,|\, \delta$, $p\,|\, P_m$ and $(p, D)=1$. It follows from this that $(p,A)\,{=}\,1$ 
or $(p, B)=1$. Without loss of generality, assume that $(p,A)=1$. Then all $d$ corresponding to the summands of the sum $S_\delta(A)$ are divisible by $p$. Since $p\,|\,P_m$, it follows from the condition $(d, P_m) = A$ in the sum $S_\delta(A)$ that $p\,|\,A$. This is a contradiction. Thus,
\begin{align}
S &= \sum_{D \,|\, P_m} \sum_{\substack{A\,|\, P_m\\ A \equiv 0\,(\operatorname{mod} D)}} \sum_{\substack{\delta\,|\, P_a(z) \\ \delta\le z \\ (\delta,P_m/D) = 1}}\frac{1}{h(\delta)} S_\delta(A) S_{\delta}(B)
\nonumber
\\
&=\sum_{D \,|\, P_m} \sum_{\substack{A\,|\, P_m\\ A \equiv\, 0\,(\operatorname{mod} D)}}\sum_{q\,|\,D} \sum_{\substack{\delta\,|\, P_a(z) \\ \delta\le z \\ (\delta,P_m/D) = 1 \\ (\delta, D) = q}}\frac{1}{h(\delta)} S_\delta(A) S_{\delta}(B)
\nonumber
\\
&=\sum_{D \,|\, P_m} \sum_{\substack{A\,|\, P_m\\ A \equiv\, 0\,(\operatorname{mod} D)}}\sum_{q\,|\,D}\frac{1}{h(q)} \sum_{\substack{\delta\,|\, P_a(z) \\ \delta\le z/q \\ (\delta,P_m) = 1 }}\frac{1}{h(\delta)} S_{\delta q}(A) S_{\delta q}(B).
\label{eq24}
\end{align}
Let us express the value $S_{\delta q}(A)$ in terms of the value $S_{\delta}(1)$. We have
$$
S_{\delta q}(A) = \sum_{\substack{d\le z \\ d\,|\,P_a(z) \\ (d, P_m) = A\\ d\equiv 0\,(\operatorname{mod} \delta q )}}\rho_d g(d) =
\sum_{\substack{d\le z/q \\ d\,|\,(P_a(z)/q) \\ (d, P_m/q) = A/q \\d\equiv 0\,(\operatorname{mod} \delta )}}\rho_{dq} g(dq).
$$
Since the condition $d\,|\,(P_a(z)/q)$ is equivalent to the condition
$$
d\,|\,P_a(z),\qquad (d,q)=1,
$$
 and $\rho_d=0$ for $d>z$,
 we get
$$
S_{\delta q}(A) = g(q)\sum_{\substack{d\le z \\ d\,|\,P_a(z) \\ (d, P_m/q) = A/q\\ (d,q) = 1 \\ d\equiv 0\,(\operatorname{mod} \delta)}}\rho_{d q} g(d) = g(q)\sum_{\substack{d\le z \\ d\,|\,P_a(z) \\ (d, P_m) = A/q\\ (d,q) = 1 \\ d\equiv 0\,(\operatorname{mod} \delta)}}\rho_{d q} g(d).
$$
Since for $(d,q) > 1$ we have $\rho_{dq} = 0$, it follows that the condition $(d,q) = 1$ in the above sum can be omitted. Thus, we get
$$
S_{\delta q}(A) = g(q)\sum_{\substack{d\le z \\ d\,|\,P_a(z) \\ (d, P_m) = A/q \\ d\equiv 0\,(\operatorname{mod} \delta)}}\rho_{d q} g(d).
$$
Further, since $(\delta, P_m) = 1$ and $Aq^{-1}\,|\,P_m$, we conclude that $(\delta,{A}{q}^{-1 }) = 1$. Hence,
\begin{equation}
\label{eq25}
S_{\delta q}(A) = g(q)\sum_{\substack{d\le z/(Aq^{-1}) \\ d\,|\,(P_a(z)/(Aq^{-1})) \\ (d, P_m/(Aq^{-1})) = 1 \\ d\equiv 0\,(\operatorname{mod} \delta)}}\rho_{d A} g(Aq^{-1}d) = g(A)\sum_{\substack{d\le z \\ d\,|\,P_a(z) \\ (d, P_m) = 1 \\ d\equiv 0\,(\operatorname{mod} \delta)}}\rho_{d A} g(d).
\end{equation}
Let $A>1$ and $A = r_1 r_2 \cdots r_s$ be the factorization of $A$. Then, $(-1)^s = \mu(A)$. Since $(d,A) = (dA,a) = 1$, Lemma \ref{l8} implies that
$$
\rho_{dA} = \mu(A)\biggl(\rho_d-\frac{\mu(d)h(d)}{H_a(z)g(d)}R\biggr),
$$
where
$$
R = \sum_{k=1}^{\omega(A)}\biggl( \prod_{i = 1}^{k-1} \frac{h(r_i)}{g(r_i)}\biggr) \sum_{\substack{z/(d\alpha_k)< l \le z/(d\alpha_{k-1}) \\ (l,d \alpha_k a) = 1}} \mu^2(l) h(l),\qquad \alpha_k = \prod_{i = 1}^{k} r_i.
$$
From \eqref{eq25} we get
$$
S_{\delta q}(A) = \mu(A)g(A)\biggl(S_{\delta}(1)-\frac{R'}{H_a(z)}\biggr),
$$
where
$$
R' = \sum_{k=1}^{\omega(A)}\biggl( \prod_{i = 1}^{k-1}\frac{h(r_i)}{g(r_i)}\biggr) \sum_{\substack{d\le z \\ d\,|\,P_a(z) \\ (d, P_m) = 1 \\ d\equiv 0\,(\operatorname{mod} \delta)}}\mu(d)h(d) \sum_{\substack{z/(d\alpha_k)< l \le z/(d\alpha_{k-1}) \\ (l,d \alpha_k a) = 1}} \mu^2(l) h(l).
$$
From Lemma \ref{l9} and the equality $h(p)/g(p)=1+h(p)$ we obtain the following estimate of $R'$:
$$
R'=\sum_{k=1}^{\omega(A)}\biggl( \prod_{i = 1}^{k-1}(1+h(r_i))\biggr) R_k(1,\delta)\ll\omega(A)\prod_{p\,|\,P_m}(1+h(p))^2\mu^2(\delta) h(\delta)\ll_m \mu^2(\delta) h(\delta).
$$
Thus, for $A>1$ we have
\begin{equation}
S_{\delta q}(A) = \mu(A)g(A)\biggl( S_{\delta }(1)+O_m\biggl( \frac{\mu^2(\delta) h(\delta)}{H_a}\biggr) \biggr).
\label{eq26}
\end{equation}
From \eqref{eq25} for $A = 1$ we get $S_{\delta q} (1) = S_{\delta}(1)$, thus the equality \eqref{eq26} also holds for $A = 1$.

Then from \eqref{eq24}, we obtain
\begin{align*}
S &= \sum_{D \,|\, P_m} \sum_{\substack{A\,|\, P_m\\ A \equiv 0\,(\operatorname{mod} D)}}\sum_{q\,|\,D}\frac{1}{h(q)} \sum_{\substack{\delta\,|\, P_a(z) \\ \delta\le z/q \\ (\delta,{P_m})=1}}\frac{1}{h(\delta)} \mu(A)g(A)\mu(B)g(B)
\\
&\qquad\times\biggl( S_{\delta}(1)+O_m\biggl(\frac{\mu^2(\delta) h(\delta)}{H_a(z)} \biggr) \biggr)^2
\\
&=\mu(P_m) g(P_m) \sum_{D \,|\, P_m}\mu(D) g(D) \sum_{\substack{A\,|\, P_m\\ A \equiv\, 0\,(\operatorname{mod} D)}}\sum_{q\,|\,D}\frac{1}{h(q)}
\\
&\qquad \times \sum_{\substack{\delta\,|\, P_a(z) \\ \delta\le z/q \\ (\delta,{P_m}) = 1 }}\frac{1}{h(\delta)} \biggl( S_{\delta}(1)+O_m\biggl(\frac{\mu^2(\delta) h(\delta)}{H_a(z)} \biggr) \biggr)^2.
\end{align*}
Since for any square-free $d$ we have
$$
\frac{g(d)}{h(d)}\le 1,
$$
it follows that
\begin{align}
S &\ll_m g(P_m)\sum_{D\,|\,P_m}\frac{g(D)}{h(D)}\sum_{\substack{A\,|\,P_m \\ A\equiv 0\, (\operatorname{mod} D)}}\sum_{q\,|\,D}h\biggl(\frac{D}{q}\biggr)
\nonumber
\\
&\qquad\times \sum_{\substack{\delta\,|\, P_a(z) \\ \delta\le z/q \\ (\delta,{P_m}) = 1 }}\frac{1}{h(\delta)} \biggl( S_{\delta}(1)+\frac{\mu^2(\delta) h(\delta)}{H_a(z)} \biggr)^2
\nonumber
\\
&\ll_m g(P_m) W \sum_{\substack{\delta\,|\, P_a(z) \\ \delta\le z \\ (\delta,{P_m}) = 1 }}\frac{1}{h(\delta)} \biggl( S_{\delta}^2(1)+\frac{\mu^2(\delta) h^2(\delta)}{H^2_a(z)} \biggr),
\label{eq27}
\end{align}
where the sum $W$ is defined by the equality
$$
W = \sum_{D\,|\,P_m}\sum_{\substack{A\,|\,P_m \\ A\equiv 0\,(\operatorname{mod} D)}}\sum_{d\,|\,D}h(d).
$$
Note that $W\ll 4^m$. Indeed,
\begin{align*}
W &= \sum_{D\,|\,P_m}\prod_{p\,|\,D}(1+h(p))\sum_{\substack{A\,|\,P_m \\ A\equiv 0\, (\operatorname{mod} D)}}1 = \sum_{D\,|\,P_m} \prod_{p\,|\,D}(1+h(p)) \frac{\tau(P_m)}{\tau(D)}
\\
&= \tau(P_m)\sum_{D\,|\,P_m}\prod_{p\,|\,D}\frac{1+h(p)}{2} = \tau(P_m)\prod_{p\,|\,P_m}\frac{3+h(p)}{2} = \prod_{p\,|\,P_m}(3+h(p))\ll 4^m.
\end{align*}
Then, from \eqref{eq27} and \eqref{eq15} we find that
\begin{equation}
\label{eq28}
S\ll_m g(P_m)\Biggl( \sum_{\substack{\delta\,|\, P_a(z) \\ \delta\le z \\ (\delta,{P_m}) = 1 }}\frac{S_{\delta}^2(1)}{h(\delta)}+\frac{1}{H_a(z)}\Biggr).
\end{equation}
Note that
\begin{align*}
S_{\delta}(1) &= \sum_{\substack{d\le z \\ d\,|\,P_a(z) \\ (d, P_m) = 1\\ d\equiv 0\,(\operatorname{mod} \delta)}}\rho_d g(d) = \sum_{\substack{d\le z \\ d\,|\,P_a(z) \\ d\equiv 0\,(\operatorname{mod} \delta )}}\rho_d g(d)- \sum_{\substack{\Delta\,|\,P_m\\ \Delta>1}} S_{\delta}(\Delta)
\\
&=x_{\delta}-\sum_{\substack{\Delta\,|\,P_m\\ \Delta>1}} \mu(\Delta)g(\Delta)\biggl( S_{\delta}(1)+ O_m\biggl( \frac{\mu^2(\delta)h(\delta)}{H_a(z)}\biggr) \biggr)
\\
&=x_{\delta}-S_{\delta}(1)\sum_{\substack{\Delta\,|\,P_m\\ \Delta>1}} \mu(\Delta)g(\Delta) + O_m\Biggl( \frac{\mu^2(\delta)h(\delta)}{H_a(z)}\sum_{\substack{\Delta\,|\,P_m \\ \Delta > 1}}\mu^2(\Delta)g(\Delta)\Biggr).
\end{align*}
Then, we obtain
$$
S_{\delta}(1) \sum_{\Delta\,|\, P_m}\mu(\Delta)g(\Delta) = x_{\delta} + O_{m}\biggl(\frac{\mu ^2(\delta)h(\delta)}{H_a(z)} \biggr).
$$
Since $0<g(p) \le 1/(2\ln 2)$ for any prime $p$, it follows that
$$
\sum_{\delta\,|\, P_m}\mu(\Delta)g(\Delta) = \prod_{p\,|\,P_m}(1-g(p)) \gg_m 1.
$$
Hence, we get
$$
S^2_{\delta}(1)\ll_m x^2_{\delta}+\frac{\mu^2(\delta) h^2(\delta)}{H_a^2(z)}.
$$

Note that
$$
\sum_{\substack{\delta\,|\, P_a(z) \\ \delta\le z}}\frac{1}{h(\delta)} x^2_{\delta} = \frac{1}{H_a(z)}.
$$
Indeed,
\begin{align*}
\frac{1}{H_a(z)} &= \sum_{d_1,d_2\,|\,P_a(z)} \rho_{d_1}\rho_{d_2}g(d_1)g(d_1)\frac{1}{g((d_1,d_2))}
\\
&=\sum_{\substack{\delta\,|\,P_a(z)\\\delta \le z}}\frac{1}{h(\delta)}\Biggl( \sum_{\substack{d\le z \\ d\,|\,P_a(z) \\ d\equiv 0\,(\operatorname{mod} \delta )}}\rho_d g(d)\Biggr)^2 = \sum_{\substack{\delta\,|\, P_a(z) \\ \delta\le z}}\frac{1}{h(\delta)} x^2_{\delta}.
\end{align*}
Thus, from \eqref{eq28} we get
$$
S\ll_m \frac{g(P_m)}{H_a(z)} = \frac{g(p_1)g(p_2)\cdots g(p_m)}{H_a(z)}\ll_m \frac{1}{p_1p_2\cdots p_m H_a(z)}.
$$
Substituting this estimate for \eqref{eq23}, we obtain that
$$
T_m(z;a) \ll \sum_{p_1\le z}\frac{f_1(p_1)}{p_1} \sum_{p_2\le z}\frac{f_1(p_2)}{p_2} \cdots \sum_{p_m\le z}\frac{f_m(p_m)}{p_m}\, |S|\ll_m \frac{1}{H_a(z)}.
$$
The claim follows.
\end{proof}

\begin{lemma}
\label{l11}
Let $g(n)$ be a multiplicative function, for which

--~for some constant $A_1\ge 1$ and any prime $p$
\begin{equation}
	\label{eq29}
	0\le g(p)\le 1-\frac{1}{A_1};
\end{equation}

--~for some values $\kappa$, $L$, $A_2>0$ the following inequalities hold
\begin{equation}
	\label{eq30}
	-L\le \sum_{u\le p<v}g(p)\ln p-\kappa\ln\biggl(\frac{v}{u}\biggr)\le A_2.
\end{equation}

Then for $W(z) = \prod_{p<z}(1-g(p))$ we have
$$
W(z) = \prod_{p}(1-g(p))\biggl(1-\frac{1}{p}\biggr)^{-\kappa}\frac{e^{-\gamma\kappa}}{(\ln z)^{\kappa}}\biggl(1+O\biggl(\frac{L}{\ln z}\biggr)\biggr),
$$
where $\gamma$~ is the Euler--Mascheroni constant.
\end{lemma}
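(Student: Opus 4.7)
The plan is to compare $W(z)$ with $V(z)^{\kappa}$, where $V(z) = \prod_{p<z}(1-1/p)$, and invoke the classical theorems of Mertens. First I would set $S(t) = \sum_{p\le t} g(p)\ln p$. Applying hypothesis \eqref{eq30} with a fixed lower endpoint and variable upper endpoint $t$ gives $S(t) = \kappa\ln t + O(L+A_2)$ for $t\ge 2$. A second use of \eqref{eq30}, with $u=p$ and $v=2p$, gives the pointwise bound $g(p)\ln p \ll 1$, hence $g(p) \ll 1/\ln p$.

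Next, Abel summation on $S(t)$ yields
$$
\sum_{p<z} g(p) = \frac{S(z)}{\ln z} + \int_2^z \frac{S(t)}{t(\ln t)^2}\,dt = \kappa\ln\ln z + C_g + O(L/\ln z)
$$
for some constant $C_g$ depending only on $g$. Comparing with the classical Mertens' formula $\sum_{p<z} 1/p = \ln\ln z + M + O(1/\ln z)$ then shows that the difference
$$
\sum_{p<z}(g(p) - \kappa/p) = (C_g - \kappa M) + O(L/\ln z)
$$
converges, with tail beyond $z$ of size $O(L/\ln z)$.

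To pass from this to $\ln W(z)$, I would write $\ln(1-x) = -x - \psi(x)$ where $\psi(x) = \sum_{k\ge 2}x^k/k$ satisfies $|\psi(x)| \le A_1 x^2/2$ on $[0,\,1-1/A_1]$, and split
$$
\ln W(z) - \kappa\ln V(z) = -\sum_{p<z}(g(p) - \kappa/p) - \sum_{p<z}\bigl(\psi(g(p)) - \kappa\psi(1/p)\bigr).
$$
The second sum converges absolutely: another partial summation on $S(t)$ yields $\sum_p g(p)/\ln p < \infty$, whence $\sum_p g(p)^2 \ll \sum_p g(p)/\ln p <\infty$; the same argument bounds its tail beyond $z$ by $O(L/\ln z)$. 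Combined with the previous step this gives
$$
\ln W(z) - \kappa \ln V(z) = \ln P + O(L/\ln z),\qquad P = \prod_p (1-g(p))(1-1/p)^{-\kappa}.
$$

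Finally, the Mertens product formula $V(z) = (e^{-\gamma}/\ln z)(1+O(1/\ln z))$ gives $\kappa\ln V(z) = -\gamma\kappa - \kappa\ln\ln z + O(1/\ln z)$; substitution and exponentiation produce the claimed formula. I expect the main obstacle to be tracking the cancellation in $\sum_{p<z}(g(p)-\kappa/p)$: the two summands each grow like $\ln\ln z$ individually, so the error $O(L/\ln z)$ in the difference must be extracted by careful Abel summation exploiting $|S(t) - \kappa\ln t| \ll L+A_2$. Once this cancellation, and the analogous partial summation bounding the higher-order remainder $\sum_p g(p)/\ln p$, is in place, the rest is bookkeeping.
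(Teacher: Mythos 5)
The paper offers no proof of its own here: it simply cites Halberstam--Richert, \emph{Sieve Methods}, Lemma~5.3, and your outline---extracting $\sum_{p\le t}g(p)\ln p=\kappa\ln t+O(L+A_2)$ from the two-sided hypothesis, passing to $\sum_{p<z}g(p)$ by Abel summation, comparing with Mertens' theorems, and controlling the quadratic remainder $\sum_p\psi(g(p))$ via the pointwise bound $g(p)\ll 1/\ln p$ obtained from \eqref{eq30} with $u=p$, $v=2p$---is precisely the standard argument found there. The only point worth flagging is that writing the final error as $O(L/\ln z)$ rather than $O((L+1)/\ln z)$ presumes, as Halberstam--Richert do, that $\kappa$, $A_1$, $A_2$ are absorbed into the implied constant and that $L\ge 1$, since the tails you obtain from Mertens and from $\sum_{p>z}g(p)/\ln p$ carry a $\kappa/\ln z$ contribution independent of $L$.
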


\begin{proof}[{\rm see\ in~\cite[\S\,5, Lemma~5.3]{13}}]
\end{proof}

\begin{lemma}
\label{l12}
Let $g(n)$~ and $h(n)$~ be multiplicative functions, and
$$
0\le g(p)<1,\qquad h(p) = \frac{g(p)}{1-g(p)}
$$
for any prime $p$. Further, suppose that the conditions \eqref{eq29} and \eqref{eq30} are hold. Put
\begin{equation}
	\label{eq31}
	H(z) = \sum_{k\le z}\mu^2(k)h(k), \qquad W(z) = \prod_{p<z}(1-g(p)).
\end{equation}

Then, we have
$$
\frac{1}{H(z)} = W(z)e^{\gamma\kappa}\Gamma(\kappa+1)\biggl(1+O\biggl(\frac{L}{\ln z }\biggr)\biggr),
$$
where $\gamma$~ is the Euler--Mascheroni constant.
\end{lemma}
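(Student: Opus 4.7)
The strategy is to reduce the statement, via Lemma~\ref{l11}, to a direct asymptotic formula for $H(z)$, and then to derive that formula by the standard sieve-theoretic method. Setting $C_0 := \prod_p(1-g(p))(1-1/p)^{-\kappa}$, Lemma~\ref{l11} gives
$$
W(z) = C_0\, \frac{e^{-\gamma\kappa}}{(\ln z)^{\kappa}}\biggl(1+O\biggl(\frac{L}{\ln z}\biggr)\biggr),
$$
so the claim of Lemma~\ref{l12} is equivalent to the asymptotic
$$
H(z) = \frac{(\ln z)^{\kappa}}{C_0\,\Gamma(\kappa+1)}\biggl(1+O\biggl(\frac{L}{\ln z}\biggr)\biggr).
$$

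To prove this, I would first transform~\eqref{eq30} by partial summation into the Mertens-type estimates
$$
\sum_{p\le z}h(p)\ln p = \kappa\ln z+O(L), \qquad \sum_{p\le z}\frac{h(p)}{p} = \kappa\ln\ln z + c_1 + O\biggl(\frac{L}{\ln z}\biggr),
$$
where the passage from $g$ to $h$ is legitimate because $h(p)-g(p)=g(p)^2/(1-g(p))\ll g(p)^2$ is absolutely summable thanks to~\eqref{eq29} and~\eqref{eq30}. Next, using the identity $\ln n = \sum_{p\mid n}\ln p$ valid for squarefree $n$, I would expand
$$
\sum_{n\le z}\mu^2(n)h(n)\ln n=\sum_{p\le z}h(p)\ln p\cdot\widetilde H_p(z/p),
$$
where $\widetilde H_p(y) := \sum_{n\le y,\,(n,p)=1}\mu^2(n)h(n)$. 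Combining this expansion with Abel summation on $H(z)$ leads to an integral equation of the form
$$
H(z)\ln z=\kappa\int_2^z H(t)\,\frac{dt}{t}+O\bigl(L\cdot H(z)\bigr),
$$
whose bootstrapped solution is $H(z)=C(\ln z)^{\kappa}(1+O(L/\ln z))$ for some constant $C$.

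The constant $C$ is pinned down by analysing the Dirichlet series $F(s)=\sum_{n}\mu^2(n)h(n)n^{-s}=\prod_p(1+h(p)/p^s)$ near $s=1$, using the identity $1+h(p)=1/(1-g(p))$; after algebraic rearrangement one finds $C=1/(C_0\,\Gamma(\kappa+1))$, which is exactly what the equivalent form above demands.

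The main obstacle is uniform propagation of the error $O(L/\ln z)$ through the integral equation, together with the unambiguous identification of $C$. Since the entire argument is classical---it is the companion of Lemma~\ref{l11} in the sieve-theoretic monograph \cite{13}---one may alternatively simply cite the corresponding lemma, in the same spirit as the author's treatment of Lemma~\ref{l11}.
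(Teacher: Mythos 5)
The paper's own proof is a one-line citation to \cite[\S\,5, Lemma~5.4]{13}, precisely the alternative you flag at the end, so in that sense you have identified the paper's route. Your direct sketch, however, has a gap in the $g\to h$ Mertens transfer. From \eqref{eq29} and \eqref{eq30} one can indeed show $\sum_p g(p)^2<\infty$ (applying \eqref{eq30} to a singleton interval gives $g(p)\ln p\le\kappa+A_2$, then use Abel summation), but the estimate your argument actually needs is $\sum_{p\le z}(h(p)-g(p))\ln p=O(L)$, and this does \emph{not} follow from the hypotheses: since $g(p)^2\ln p\le(\kappa+A_2)\,g(p)$ and $\sum_{p\le z}g(p)=\kappa\ln\ln z+O(1)$, the difference can grow like $\ln\ln z$. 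For instance, if $g$ is supported on one prime $p_k$ per dyadic block $[2^k,2^{k+1})$ with $g(p_k)=c\ln 2/\ln p_k$ and $c$ small, then \eqref{eq29}, \eqref{eq30} hold with $\kappa=c$ and $L=O(c)$, yet $\sum_{p\le z}g(p)^2\ln p\asymp\ln\ln z$. Thus your Mertens estimate for $h$ holds only with error $O(L+\ln\ln z)$, and the error propagated through your integral equation becomes $O((L+\ln\ln z)/\ln z)$, strictly weaker than the claimed $O(L/\ln z)$; this matters in the present application, where $L=O_a(1)$. The proof in \cite{13} avoids a Mertens estimate for $h$ entirely, comparing $H(z)$ directly to $1/W(z)=\prod_{p<z}(1+h(p))$ via a Buchstab-type identity. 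The citation the paper uses is therefore the appropriate route.
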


\begin{proof}[{\rm see\ in~\cite[\S\,5, Lemma~5.4]{13}}]
\end{proof}

From Lemmas \ref{l11} and \ref{l12} for $H(z)$ defined in~\eqref{eq31}, we get
\begin{equation}
\label{eq32}
\frac{1}{H(z)} = \prod_{p}(1-g(p)) \biggl(1-\frac{1}{p}\biggr)^{-\kappa} \frac{\Gamma(\kappa+1)}{(\ln z)^{\kappa}}\biggl(1+O\biggl(\frac{L}{\ln z}\biggr)\biggr).
\end{equation}
We need the main lemma of \cite{9}.

\begin{lemma}
\label{l13}
Let $d\ge 1$~ be a fixed integer, $f(q)$~ be a positive function such that
$$\sum_{q\le x}f(q)\ll x(\ln x)^{\kappa}$$ for some constant $\kappa > 0$. Then for any fixed $B>0$ there exists $A(B) > 0$ such that the inequality
$$
R = \sum_{q\le Q}\frac{f(q)}{\varphi(q)}\sum_{\substack{\chi\,\operatorname{mod} q \\ \chi\neq\chi_0}}\,\Biggl|\sum_{\substack{n\le N \\ (n, d) = 1}} \frac{\chi(n)}{\tau(n)}\Biggr| \ll x(\ln x)^{-B}
$$
holds for any $Q$, $N$ such that $Q\le \sqrt{x}\,(\ln x)^{-A}$, $N\le x$, and an implied constant is inefficient and depends only on $B$, $d$ and $f$.
\end{lemma}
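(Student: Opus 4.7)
The plan is to mirror the proof of the classical Bombieri--Vinogradov theorem, treating $1/\tau(n)$ as a surrogate for $\Lambda(n)$ but with a square-root singularity at $s=1$ instead of a simple pole. The essential ingredients will be Perron's formula, the Vinogradov-type zero-free region \eqref{eq7} for Dirichlet $L$-functions, and the large sieve inequality.

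First I would perform the classical reduction to primitive characters, replacing the sum over $\chi\,\operatorname{mod} q$ by a sum over primitive $\chi^*$ inducing $\chi$; by positivity of $f$, this preserves the hypothesis $\sum_{q\le x}f(q)\ll x(\ln x)^{\kappa}$ up to logarithmic factors after redistribution of the weight. The inner sum is then treated via Perron's formula at $b=1+1/\ln N$ with truncation height $T$, giving
$$
\sum_{\substack{n\le N\\ (n,d)=1}}\frac{\chi^*(n)}{\tau(n)} = \frac{1}{2\pi i}\int_{b-iT}^{b+iT}\sqrt{L(s,\chi^*)}\,H(s,\chi^*;d)\,\frac{N^s}{s}\,ds + O\biggl(\frac{N\ln N}{T}\biggr),
$$
where, by construction analogous to Lemmas \ref{l3} and \ref{l4}, the factor $H(s,\chi^*;d)$ is regular for $\sigma\ge 3/4$ and admits polynomial bounds in $q^*$ and $d$.

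The contour is then deformed leftwards to a line $\sigma=\sigma_0$ inside the zero-free region \eqref{eq7}, with horizontal cuts going around a potential exceptional zero. Choosing $T=e^{(\ln N)^{3/5}}$ renders the horizontal contributions negligible via the convexity bound $L(\sigma+it,\chi^*)\ll(q^*|t|)^{\varepsilon}$. On the remaining vertical segment, applying Cauchy--Schwarz in $t$ and in $\chi^*$ reduces matters to a mean-value estimate of the form
$$
\sum_{q^*\le Q}\frac{1}{\varphi(q^*)}\sum_{\chi^*\text{ primitive}}\int_{-T}^{T}|L(\sigma_0+it,\chi^*)|^2\,dt\ll (Q^2+T)\,T(\ln QT)^{C},
$$
a classical consequence of the large sieve inequality. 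The weight $f(q)/\varphi(q)$ is absorbed via partial summation using $\sum_{q\le x}f(q)\ll x(\ln x)^{\kappa}$, and the restriction $Q\le\sqrt{x}(\ln x)^{-A}$ ensures the dominant $Q^2T$ term contributes $\ll x(\ln x)^{-B}$ once $A$ is taken large enough relative to $B$ and $\kappa$.

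The main obstacle is the multi-valued character of $\sqrt{L(s,\chi^*)}$: since Dirichlet $L$-functions may vanish in the critical strip, the square root has branch points there, forcing the entire argument to be confined to a zero-free region for each character encountered. Handling the possible Landau--Siegel exceptional zero of a real non-principal character requires Siegel's theorem, which provides only an ineffective lower bound on $L(1,\chi)$ for any potentially offending real character; this is precisely the source of the ineffective dependence of $A$ on $B$ asserted in the lemma. Combining all the above estimates then yields $R\ll x(\ln x)^{-B}$, as required.
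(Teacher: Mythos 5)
The paper does not prove Lemma~\ref{l13}; it simply cites \cite[Lemma~13]{9} (Korolev). So there is no ``paper's own proof'' to compare against, and the relevant question is whether your sketch is a viable route to the result. It is not, and the failure is structural.

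Your plan is to apply Perron's formula to the Dirichlet series $\sqrt{L(s,\chi)}\,H(s,\chi;d)$ and shift the contour to a line $\sigma=\sigma_0$ ``inside the zero-free region \eqref{eq7}.'' But \eqref{eq7} is the Korobov--Vinogradov zero-free region for $\zeta(s)$ alone; for a Dirichlet $L$-function to modulus $q$ the corresponding zero-free region is $\sigma>1-c/\log\bigl(q(|t|+2)\bigr)$ (possibly improved in the $t$-aspect, but not in the $q$-aspect), and for a real character there may in addition be a Siegel zero arbitrarily close to $1$. Since the lemma allows $q$ to run up to $Q\asymp\sqrt{x}(\ln x)^{-A}$, for $q$ near that size one has $1-\sigma_0\asymp 1/\ln q\asymp 1/\ln x$, so $N^{\sigma_0-1}\gg 1$: the contour shift saves \emph{nothing} over the trivial bound $|S(N,\chi)|\ll N$. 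Summing $N$ over the roughly $Q^2\asymp x(\ln x)^{-2A}$ pairs $(q,\chi)$, weighted by $f(q)/\varphi(q)$, gives a total of order $x^{3/2}$ rather than $x(\ln x)^{-B}$. This is precisely the reason the Bombieri--Vinogradov theorem itself cannot be proved by treating each $L$-function individually via its zero-free region, and the same obstruction applies verbatim here. The ``mean-value of $|L(\sigma_0+it,\chi)|^2$ via the large sieve'' step you invoke does not rescue this: at $\sigma_0$ near $1$ those values are already $O((\ln qT)^{c})$ pointwise, so the mean-value estimate supplies no extra cancellation, and in any case the Cauchy--Schwarz you sketch only gives a bound of size $x(\ln x)^{O(1)}$, not $x(\ln x)^{-B}$.

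The mechanism that actually drives results of Bombieri--Vinogradov type is entirely absent from your sketch: a split of the range of $q$, with a Siegel--Walfisz theorem handling small moduli (this is where the ineffectivity genuinely originates, as you correctly note), and for the remaining $q$ a combinatorial decomposition of the arithmetic function (here $1/\tau$, written as a suitable Dirichlet convolution, e.g.\ via a truncation of the Dirichlet-series factorization $\sqrt{\zeta(s)}=M(s)\cdot(1+\text{small})$) that exposes \emph{bilinear} structure, to which the large sieve inequality for character sums
$$\sum_{q\le Q}\frac{q}{\varphi(q)}\mathop{\sum\nolimits^{*}}_{\chi\bmod q}\Bigl|\sum_{n\le N}a_n\chi(n)\Bigr|^2\ll(N+Q^2)\sum_{n\le N}|a_n|^2$$
is applied in a way that exploits cancellation in both $n$ and the modulus simultaneously. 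Without that bilinear step there is no averaging gain over $q$, and the argument cannot reach the target bound. You should either reproduce the Vaughan/Heath-Brown-identity style decomposition adapted to $1/\tau$ or follow Korolev's proof in \cite{9} directly; the Perron/contour-shift approach is only suitable for a Siegel--Walfisz-type estimate for bounded $q$, not for the full range $q\le\sqrt{x}(\ln x)^{-A}$.
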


\begin{proof}[{\rm see\ \cite[ Lemma~13]{9}}]
\end{proof}

\section{Proof of the theorem}
\label{s2}

We have
$$
\Phi_a(x)=\sum _{p \le x} \frac{1}{\tau (p+a)}\le \sum_{\substack{ n+a\le x \\ (n,P_a(z))=1 }} \frac{1}{\tau (n+a)}+r_1,
$$
where $r_1=r_1(z;a)$~ is the number of those $n\le z$ for which $n+a$ is prime, so that
$$
r_1\ll\frac{z}{\ln z}+a.
$$
Further, let $\rho_d$ be chosen as in~\eqref{eq17}, then $\rho_1 = 1$, $\rho_d = 0$ for $d>z$ or $d\nmid P_a(z)$; and $|\rho_d|\le 1$. Then, we get
\begin{align*}
\Phi_a(x) &\le\sum_{\substack{ n+a\le x \\ (n,P_a(z))=1 }} \frac{1}{\tau (n+a)}+r_1 \le \sum_{\substack{ n+a\le x}} \frac{1}{\tau (n+a)}\biggl(\sum_{d\,|\,(n,P_a(z))}\rho_d\biggr)^2+ r_1
\\
&=\sum_{\substack{d\,|\,P_a(z) \\ d\le z^2}}{\lambda_d}\sum_{\substack{ n+a\le x \\ n\equiv0\,(\operatorname{mod} d)}}\frac{1}{\tau(n+a)}+r_1,
\end{align*}
where $\lambda_d = \sum_{[d_1, d_2] = d}\rho_{d_1}\rho_{d_2}$. Note that
$$
|\lambda_d|\le \sum_{[d_1, d_2] = d} 1 = 3^{\omega(d)}.
$$
Since $(d,a) = 1$, we have
$$
\sum_{\substack{ n+a\le x \\ n\equiv0\, (\operatorname{mod} d)}}\frac{1}{\tau(n+a)} = \sum_{\substack{ k\le x \\ k\equiv a\,(\operatorname{mod} d)}}\frac{1}{\tau(k)} = \frac{1}{\varphi(d)}\sum_{\chi\, \operatorname{mod} d}\overline{\chi}(a)\sum_{k\le x}\frac{\chi(k)}{\tau(k)},
$$
where the summation is over all Dirichlet characters modulo $d$. Separating the contribution of the main character and denoting by $r_2$ the contribution from the remaining characters, we obtain
$$
\Phi_a(x)\le \sum_{\substack{d\,|\,P_a(z) \\ d\le z^2}}\frac{\lambda_d}{\varphi(d)} \sum_{\substack{ k\le x \\ (k, d) = 1}}\frac{1}{\tau(k)}+r_2+r_1,
$$
where
$$
r_2 = r_2(x,z;a) \ll\sum_{d\le z^2}\frac{|\lambda_d|}{\varphi(d)} \sum_{\substack{\chi\,\operatorname{mod} d \\ \chi\neq\chi_0}}\, \biggl|\sum_{\substack{ k\le x}}\frac{\chi(k)}{\tau(k)}\biggr| \ll \sum_{d\le z^2}\frac{3^{\omega(d)}}{\varphi(d)} \sum_{\substack{\chi\,\operatorname{mod} d \\ \chi\neq\chi_0}}\, \biggl|\sum_{\substack{ k\le x}}\frac{\chi(k)}{\tau(k)}\biggr|.
$$
Consider an integer $m_0\ge 1$. Then from Lemma \ref{l4} we get
\begin{align}
\Phi_a(x) &\le \sum_{\substack{d\,|\,P_a(z) \\ d\le z^2}}\frac{\lambda_d}{\varphi(d)}\biggl(\frac{x}{\sqrt{\pi\ln x}}\sum_{k=0}^{m_0}\frac{(-1)^k\binom{2k}{k}}{4^k}\, \frac{G_d^{(k)}(1)}{(\ln x)^k}+R_{m_0}(x;d)\biggr)+r_2+r_1
\nonumber
\\
&= \frac{x}{\sqrt{\pi\ln x}}\sum_{k=0}^{m_0}\frac{(-1)^k\binom{2k}{k}}{4^k(\ln x)^k} \sum_{\substack{d\,|\,P_a(z) \\ d\le z^2}}\frac{\lambda_d G_d^{(k)}(1)}{\varphi(d)}+r_3+r_2+r_1,
\label{eq33}
\end{align}
where
\begin{align*}
r_3 &\ll_{m_0}\frac{x}{(\ln x)^{m_0 + 3/2}}\sum_{d\le z^2}\frac{|\lambda_d|}{\varphi(d)}(\omega(d)+2)^{10}
\\
&\ll \frac{x}{(\ln x)^{m_0 + 3/2}}\sum_{d\le z^2} \frac{(3+1/2)^{\omega(d)}}{\varphi(d)}
\ll\frac{x(\ln z)^{3+1/2}}{(\ln x)^{m_0+3/2}}.
\end{align*}
Substituting the equality
$$
G^{(k)}_d(s) = \sum_{l = 0}^k\binom{k}{l}H^{(k-l)}(s)J_d^{(l)}(s)
$$
for~\eqref{eq33} and changing the order of summation over the variables $d$ and $l$, we obtain
\begin{equation}
\label{eq34}
\Phi_a(x)\le \frac{x}{\sqrt{\pi\ln x}}\sum_{k = 0}^{m_0}\frac{a_k}{(\ln x)^k}\sum_{l = 0}^k b_l(k)S_l(z;a)+r_3+r_2+r_1,
\end{equation}
where
\begin{gather*}
a_k = \frac{(-1)^k\binom{2k}{k}}{4^k},\qquad b_l(k) = \binom{k}{l} H^{(k-l)}(1),
\\
S_l(z;a) = \sum_{\substack{d\,|\,P_a(z) \\ d\le z^2}}\frac{\lambda_d J_d^{(l)}(1)}{\varphi(d)}.
\end{gather*}
Let us estimate the values $S_l(z;a)$. We set $g(d) = J_d(1)/\varphi(d)$, then for $l = 0$ we get
\begin{equation}
\label{eq35}
S_0(z;a) = \sum_{\substack{d_1,d_2 \,|\,P_a(z) \\ d_1, d_2\le z}}\rho_{d_1}\rho_{d_2}g([d_1,d_2]) = \frac{1}{H_a(z)}.
\end{equation}
Let $l\ge 1$, then from Lemma \ref{l6} we get
\begin{equation}
\label{eq36}
S_l(z;a) = \sum_{\substack{d\,|\,P_a(z) \\ d\le z^2}}\frac{\lambda_d}{\varphi(d)}J_d(1)Q_l.
\end{equation}
Here $Q_l$~ is a polynomial of degree $l$ in variables $I(1), I'(1), \dots, I^{(l-1)}(1)$ with integer coefficients:
$$
Q_l = \sum_{m=0}^l\sum_{\substack{ |\mathbf{i}| = m }}a_{\mathbf{i}}\prod_{\substack{r=1}}^l\bigl( I^{(r-1)}(1)\bigr)^{i_r} ,
$$
where $\mathbf{i} = (i_1,\dots, i_l)$ is an integer vector, $i_r\geqslant 0$, $|\mathbf{i}| = i_1+\dots+i_l$, and $I(s)$~ is defined in~\eqref{eq12}. For $1\le r\le l$ we have
$$
I^{(r-1)}(1) = \sum_{p\,|\,d}\mathfrak{f}_r(p),
$$
where $\mathfrak{f}_r(p)=f^{(r-1)}(1;p)$. Lemma \ref{l5} implies that for any prime $p$
$$
\mathfrak{f}_{r}(p)\ll_r\frac{(\ln p)^{r+1}}{p}\ll_{m_0}\frac{(\ln p)^{m_0+1}}{p}.
$$
For convenience, for each $r\ge 1$ we consider $i_r$ identical functions with different indices
$$
f_{r1} = f_{r_2} = \dots = f_{ri_{r}} = \mathfrak{f}_r.
$$
Hence, using this notation, we obtain
$$
(I^{(r-1)}(1))^{i_r} \,{=} \sum_{p_{r1}\,|\,d}f_{r1}(p_{r1})\cdots \sum_{p_{ri_{r}}\,|\,d}f_{ri_r}(p_{ri_r})\,{=}
\sum_{p_{r1},\,\dots,\, p_{ri_r}\,|\,d}{f}_{r1}(p_{r1})\cdots {f}_{ri_r}(p_{ri_r}).
$$
Let $j_1, j_2, \dots, j_R$~be the sequence of indices of all nonzero coordinates of the vector~$\mathbf{i}$. Then $j_1<j_2<\dots<j_{R}$, $R\le l$, and $i_r \neq 0$ if and only if $r\in \{j_1, j_2, \dots, j_R\} $. Then, we have
\begin{align*}
\prod_{\substack{r=1}}^l\bigl( I^{(r-1)}(1)\bigr)^{i_r} &= \prod_{\substack{r=1 \\ i_r \neq 0}}^l \sum_{p_{r1},\dots,p_{ri_r}\,|\,d}f_{r1}(p_{r1})\cdots f_{ri_r}(p_{ri_r})
\\
&=\sum_{p_{j_11}, \dots, p_{j_Ri_{j_R}}\,|\,d} f_{j_11}(p_{j_11})\cdots f_{j_Ri_{j_R}}(p_{j_Ri_{j_R}}).
\end{align*}
Let us associate each pair $(j_\nu,t)$, where $1\le t\le i_{j_\nu}$, with the number
$$
i = i_{j_1}+\dots+i_{j_{\nu-1}}+t.
$$
With this numbering, $f_{j_\nu t}(p_{j_\nu t})$ will be rewritten as $f_i(p_i)$, where $1\le i\le m$. Then, we get
$$
\prod_{\substack{r=1}}^l\bigl( I^{(r-1)}(1)\bigr)^{i_r} = \sum_{p_1,p_2,\dots,p_m \,|\, d}f_1(p_1)f_2(p_2)\cdots f_m(p_m).
$$
Thus, for $Q_l$ we have
\begin{equation}
\label{eq37}
Q_l = \sum_{m=0}^l \sum_{\substack{|\mathbf{i}| = m}}a_{\mathbf{i}}\,\sum_{p_1,p_2,\dots,p_m \,|\, d}f_1(p_1)f_2(p_2)\cdots f_m(p_m).
\end{equation}
Substituting \eqref{eq37} for~\eqref{eq36} and changing the order of summation, we obtain
\begin{align*}
S_l(z;a) &= \sum_{m=0}^l \sum_{\substack{|\mathbf{i}|=m}} a_{\mathbf{i}}\, \sum_{\substack{p_1 \le z \\ p_1\nmid\, a}}f_1(p_1)\cdots\sum_{\substack{p_m \le z \\ p_m\nmid\, a}}f_m(p_m)
\\
&\qquad\times\sum_{\substack{d_1,d_2\,|\,P_a(z) \\ d_1, d_2\le z \\ [d_1,d_2]\equiv 0\,(\operatorname{mod} P)}}\frac{\rho_{d_1}\rho_{d_2}}{\varphi([d_1,d_2])}J_{[d_1,d_2]}(1),
\end{align*}
where $P$~ is the least common multiple of the primes $p_{1}, p_2, \dots, p_m$. Applying \eqref{eq35} for $m=0$ and Lemma \ref{l10} for $m\ge 1$, we obtain
\begin{equation}
\label{eq38}
S_l(z;a)\ll_{m_0}\frac{1}{H_a(z)}.
\end{equation}
Now we separate the term corresponding to $k = 0$ in  sum \eqref{eq34}, and apply \eqref{eq38} to the rest terms of the sum. Then, we obtain
$$
\Phi_a(x) \le\frac{H(1)}{\sqrt{\pi}}\, \frac{x}{\sqrt{\ln x}\, H_a(z)}+r_4+r_3+r_2+r_1,
$$
where
$$
r_4\ll_{m_0}\frac{x}{(\ln x)^{3/2}H_a(z)}.
$$
Let us find the asymptotic formula for $1/H_a(z)$. For this purpose we apply equality \eqref{eq32}. For prime $p$ we set
$$
g_0(p) = \begin{cases}
g(p), &\text{if } p\nmid a,
\\
0 &\text{otherwise},
\end{cases}
\qquad h_0(p) = \frac{g_0(p)}{1-g_0(p)},
$$
and define $g_0(d)$, $h_0(d)$ for square-free $d$ by multiplicativity. From the definition of the function $h_0$ we obtain
$$
H_a(z) = \sum_{k\le z}\mu^2(k)h_0(k).
$$
Now we find the parameters $A_1$, $A_2$, $L$, $\kappa$ corresponding to the function $g_0$. It follows from \eqref{eq22} that one can take $A_1 = 4$. Estimate \eqref{eq21} for $p\ge 2$ implies that
$$
g(p) = \frac{1}{p}+\frac{\theta}{p(p-1)},\qquad |\theta| \le 1.
$$
From here we get $\kappa = 1$ and $A_2$, $L = O_a(1)$. From \eqref{eq32} we obtain
\begin{align*}
\frac{1}{H_a(z)} &= \prod_{p}\biggl(1-g_0(p)\biggr) \biggl(1-\frac{1}{p}\biggr)^{-1}\frac{1}{\ln z}\biggl(1+O\biggl(\frac{1}{\ln z}\biggr)\biggr)
\\
&=\frac{C\beta(a)}{\ln z}\biggl(1+O\biggl(\frac{1}{\ln z}\biggr)\biggr),
\end{align*}
where
\begin{gather*}
C = \prod_{p}(1-g(p))\biggl(1-\frac{1}{p}\biggr)^{-1},
\\
\beta(a) = \prod_{p\,|\,a}\biggl( 1+\frac{1}{p(p-1)\ln(p/(p-1))-1}\biggr).
\end{gather*}
Hence, we obtain the following estimate for the sum $\Phi_a(x)$:
\begin{equation}
\label{eq39}
\Phi_a(x) \le K\beta(a)\frac{x}{\sqrt{\ln x}\,\ln z}\biggl(1+O\biggl(\frac{1}{\ln z}\biggr)\biggr)+r_4+r_3+r_2+r_1,
\end{equation}
where
\begin{align*}
K &= \frac{H(1)C}{\sqrt{\pi}} = \frac{1}{\sqrt{\pi}}\prod_p\sqrt{p^2-p}\, \ln{\frac{p}{p-1}}\biggl(1-\frac{1}{p(p-1)\ln(p/(p-1))}\biggr) \biggl(1-\frac{1}{p}\biggr)^{-1}
\\
&= \frac{1}{\sqrt{\pi}}\prod_{p}\sqrt{\frac{p}{p-1}}\biggl(p\ln\frac{p}{p-1} -\frac{1}{p-1}\biggr).
\end{align*}
It remains to estimate the remainder terms. Now we take 
$$
f(q) = 3^{\omega(q)},\qquad B = \frac{5}{2},\qquad d = 1,\qquad Q = \frac{\sqrt{x}}{(\ln x)^A},\qquad N = x.
$$
We choose $m_0 = 5$ and
$$
z = \sqrt{Q} = \frac{\sqrt[4]{x}}{(\ln x)^{A/2}},
$$
then Lemma \ref{l13} implies that $r_2\ll x/(\ln x)^{5/2}$. For the remainders $r_4$, $r_3$, $r_1$, we obtain
$$
r_4\ll\frac{x}{(\ln x)^{5/2}},\qquad r_3\ll\frac{x}{(\ln x)^3},\qquad r_1\ll\sqrt[4]{x}.
$$
It follows from \eqref{eq39} that
$$
\Phi_a(x)\le 4K\beta(a)\frac{x}{(\ln x)^{3/2}}\biggl(1+O\biggl(\frac{\ln\ln x}{\ln x}\biggr)\biggr),
$$
which completes the proof of Theorem~\ref{t1}.

\end{fulltext}

\goodbreak 

\end{document}